\newtheorem{thm}{Theorem}[section]
\newtheorem{cor}[thm]{Corollary}
\newtheorem{lem}[thm]{Lemma}
\newtheorem{example}[thm]{Example}
\begin{document}

\author{Mark Herman and Jonathan Pakianathan}
\title{On a canonical construction of tesselated surfaces via finite group theory, Part II.
}
\maketitle

\begin{abstract}
This paper is the second part of a two-part study of an elementary functorial construction of tesselated surfaces from finite groups. This elementary construction was discussed in the first part and  generally results in a large collection of tesselated surfaces per group, for example when the construction is applied to $\Sigma_6$ it yields $4477$ tesselated surfaces of 27 distinct genus and even more varieties of tesselation cell structure. These tesselations are face and edge transitive and consist of closed cell structures.

In this paper, we continue to study the distribution of these surfaces in various groups and some interesting resulting tesselations with the aid of computer computations. We also show that extensions of groups result in branched coverings between the component  surfaces in their decompositions. Finally we exploit functoriality to obtain interesting faithful, orientation preserving actions of subquotients of these groups and their automorphism groups on these surfaces and in the corresponding mapping class groups.

\noindent
{\it Keywords: Riemann surface tesselations, regular graph maps, strong symmetric genus}.

\noindent
2010 {\it Mathematics Subject Classification.}
Primary: 20D99, 55M35;
Secondary: 05B45, 57M60.
\end{abstract}

\tableofcontents

\section{Introduction}

In part I, (see \cite{HPY}) we studied a construction which takes a finite nonabelian group $G$ and constructs a $2$-dimensional simplicial complex $X(G)$ in a canonical manner from it. After a canonical resolution of singularities, we obtained an associated complex $Y(G)$ which is a disjoint union of finitely many compact, connected, oriented, triangulated 2-manifolds (we'll call these Riemann surfaces in this paper for brevity at the expense of slight abuse of notation.) The manifolds in $Y(G)$ hence form a natural set of invariants for the group. More specifically the number of times the surface of genus $g$ occurs in $Y(G)$ is a natural invariant of the group which we study.

Additionally, the Riemann surfaces arising via this functorial construction were shown in \cite{HPY} to be equipped with interesting closed-cell structures whose faces are $n$-gons for fixed $n$. These component polytopes have a face and edge transitive automorphism group and at most 2 orbits of vertices (and hence two types of vertex valency). For example, it was shown in \cite{HPY} that when $G$ is the extraspecial $p$-group of order $p^3$ where $p$ is an odd prime, $Y(G)$ yields a regular tesselation of the surface of genus $g=\frac{p(p-3)}{2}+1$ as the union of $p$ $2p$-gons.

Constructions that yield such tesselations on Riemann surfaces are not new. There are classical constructions using triangle or Fuchsian groups,
hyperbolic geometric tesselations of the Poincare disk, Cayley graphs or fundamental group arguments to construct similar objects. Often a set of generators or specific presentation is chosen - the construction in \cite{HPY} is an elementary variant, constructed in an elementary, intrinsic and functorial way from any nonabelian group.

As $X(G)$ and $Y(G)$ are determined functorially from $G$, one
can show that any automorphism of $G$ determines an orientation preserving, simplicial automorphism of $X(G)$. These simplicial automorphisms can permute the surface ``components" of $Y(G)$ but must preserve the genus and indeed the tessellation type. Thus, one can use this construction to find faithful actions of subquotients of $Aut(G)$
on various  surfaces and hence obtain embeddings of these subquotients into the group of orientation preserving diffeomorphisms of $X_g$, the surface of genus $g$.
We explore some examples of orientation preserving group actions afforded by this construction in this paper and some corresponding embeddings into mapping class groups. We also discuss some examples pertaining to the strong symmetric genus of the group, i.e. the smallest genus orientable closed surface for which the group acts on faithfully via orientation preserving diffeomorphisms. See Section~\ref{sec: mapping class} for details.

We attempt to quantify the number and type of surface components that occur in the decomposition $Y(G)$ for a given group $G$. For example when $G$ is the alternating group on $7$ letters, the construction $Y(G)$ discussed in this paper results in $16813$ surface components of $58$ distinct genus, with even more
distinct cell-structures. Tables summarizing
the various polytopes that occur in $Y(G)$ for various groups $G$ can be found in the appendix to this paper.

We also show that extensions of groups yield branched coverings between their constituent
surfaces in their decompositions.

\section{Extensions of Groups and Branched Covers.}
\label{sec: branchedcovers}

\subsection{General Extensions}

We will first consider what happens to the surface decompositions of groups in the construction $Y(G)$ under extension of groups. Please consult the first section of \cite{HPY} for the basic information on the constructions $X(G), Y(G)$ and the notation used in these constructions - we will use this notation throughout this paper.
Let
$$
1 \to N \to \Gamma \overset{\pi}{\to} G \to 1
$$
be an extension. Note that if $\hat{x}, \hat{y}$ in $\Gamma$ do not commute but their images $x,y$ in $G$ commute, then the surface in $Y(\Gamma)$ corresponding
to the edge $[(\hat{x},1),(\hat{y},1)]$ has no corresponding surface in $Y(G)$. However what we will see is that if the images in $G$ do not commute, then there is a nice
correspondence between the component corresponding to the edge $[(\hat{x},1),(\hat{y},1)]$ in $Y(\Gamma)$ and the component corresponding
to the edge $[(x,1),(y,1)]$ in $Y(G)$ given by a branched covering.

Thus let $x,y$ be a non-commuting pair of elements in $G$, and let $S$ be the component of $\hat{Y}(G)$ that is determined by the edge
$[(x,1),(y,1)]$. Fix one choice of lift $\hat{x}$ of $x$ and $\hat{y}$ of $y$ in $\Gamma$. Then the set of all lifts of $x$ to $\Gamma$ is the
set $\{\hat{x}n | n \in N \}$ of cardinality $|N|$ with a similar statement for $y$. Thus there are a total of $|N|^2$ lifts of the edge $[(x,1),(y,1)]$
to $Y(\Gamma)$ consisting of edges of the form $[(\hat{x}n_1,1), (\hat{y}n_2,1)]$ for $n_1, n_2 \in N$. Let $T_{n_1,n_2}$ be the component
determined by the edge $[(\hat{x}n_1,1), (\hat{y}n_2,1)]$ in $Y(\Gamma)$. As $n_1, n_2$ ranges over $N$, we get $|N|^2$ of these components
but they need not be distinct components, as more than one lift edge can live in the same component.
Let $T_1, \dots, T_k$ be a list of distinct components arising this way where the component $T_i$ contains $m_i$ of the lift edges.
Thus $\sum_{i=1}^k m_i = |N|^2$. We will now focus on a single lift component $T$ with $m$ lift edges.

Picking a lift edge in $T$, we get a reference triangle $[(\hat{x}n_1, 1) (\hat{y}n_2,1), (\hat{x}n_1\hat{y}n_2, 2)]$ in $T$ which maps onto the triangle
$[(x,1),(y,1), (xy,2)]$ in $S$ under a simplicial map induced by $\pi$. A simple induction using the fact that an edge in any component determines two triangles adjacent to the edge via
algebraic relations preserved by the quotient homomorphism $\pi$, shows that $\pi$ induces a non-degenerate (faces map to faces) simplicial map from
$T$ onto $S$.

Now suppose $S$ consists of $n$-gon faces in the $2$-cell structure. Consider the $n$-gon sheet containing the edge
$[(x,1),(y,1)]$ in $S$ and a polygonal face lying above it in $T$ containing one of the lifts of the reference edge. Starting at the triangle
$[(\hat{x}n_1,1), (\hat{y}n_2,1),(\alpha=\hat{x}n_1\hat{y}n_2,2)]$ and proceeding around the vertex $(\alpha,2)$ we find after moving by $n$-triangles along
the sheet, we arrive at a triangle lying above the triangle $[(x,1),(y,1),(xy,2)]$ which need not be the starting triangle. Like a spiral staircase, we have arrived at a
location directly above where we started. However by finiteness, there must be a positive integer $\ell$ such that after moving $n\ell$-triangles along
the sheet we arrive back at the starting triangle. Thus the face type of the component $T$ is $n\ell$, a multiple of the face type of the component $S$.
Note that this means that on this polygonal face of $T$, aside from at the midpoint, the map $\pi$ is an $\ell$ to $1$ map. Thus
restricted to this face, $\pi$ induces a $\ell$ to $1$ branched cover over the corresponding polygonal face of $S$, branched only over the midpoint
which has only one preimage (ramification $\ell$). By face transitivity of the cell structures, this is true for all faces of $T$ for a fixed value of $\ell$ independent
of the face. Also note that this face will contain exactly $\ell$ lifts of the reference edge in $S$ as our cell-structures are closed (no self-identifications on rims).
However different faces in $T$ covering a given face in $S$ can have intersecting rims - though their interiors are disjoint. Thus if there are $t$ faces covering
a given face in $S$, this number is independent of the face chosen by face transitivity and the map $\pi$ is a $t\ell$ branched covering map over the interior
of a face of $S$, branched over the midpoint which has only $t$ lifts (ramification index $\ell$). To complete the picture we have to deal with what is happening on the rim of these faces.

First note that two distinct faces in $T$ lying above a given face in $S$ cannot have rims which intersect along an edge. This is because if we had two adjacent faces in $T$ intersecting say in the edge $[(w,1), (z,1)]$ mapping to the same face in $S$, this means
that their midpoints $(wz,2)$ and $(zw,2)$ map to the same element in $S$ which means that the images of $w$ and $z$ commute in $G$. This does not
happen as mentioned earlier as the map $\pi: T \to S$ is well-defined and is a non-degenerate simplicial map. Thus any intersections between two of these faces along the rim happens only at vertices. Thus the $\ell$ lifts of the reference edge within each of the $t$ lift faces are all distinct and must be the full set $m$ of lifts of the
reference edge in that component, i.e., $t\ell=m$.

We hence can consider the sheet centered about $(x,1)$ in $S$ under the triangulation structure of $Y(G)$ which consists of $2\lambda_1$ triangles, where $\lambda_1$ is the valency of $(x,1)$ in the corresponding cell structure (so $\lambda_1$ $2$-cells touch $(x,1)$). Lifting this sheet to $T$ one again
gets a spiral staircase picture which shows that if $\hat{\lambda}_1$ is the valency of $(\hat{x},1)$ in $T$ under the cell-structure, then
$\hat{\lambda}_1=\ell_x \lambda_1$ and $\pi$ restricted to the sheet about $(\hat{x},1)$ in $T$ is a $\ell_x$ to $1$ branched cover over the corresponding
sheet about $(x,1)$ in $S$, branched only over $(x,1)$ with ramification index $\ell_x$. Similar arguments work over $(y,1)$ though with potentially
a different ramification index $\ell_y$. As our cell-structures have at most two orbits of vertices represented by $(x,1)$ and $(y,1)$, we conclude that
the map $\pi: T \to S$ is indeed a branched covering map, branched at most over type 1 and type 2 vertices in the $Y(G)$ structure of the component $S$
with at most $3$ types of ramification index, $\ell, \ell_x$ and $\ell_y$. Though sheets about distinct type 1-vertices do not have to have disjoint interiors
in general, the sheets about the lifts of $(x,1)$ in $T$ must all have disjoint interiors as if they didn't, there would be an edge of the form
$[(xn_1,1), (xn_2,1)]$ in $T$ which maps to a commuting pair in $S$, contradicting the earlier argument that $\pi$ induces a well-defined map to $S$ under our assumptions. Thus if there are $t_x$ sheets in $T$ lying above the sheet about $(x,1)$ in $S$, we conclude $t_x\ell_x$ is the generic $\pi$-preimage size.
Similar comments hold for $y$ in place of $x$.
Comparing with earlier expressions for the total covering number we have
$$
t_x \ell_x = t_y \ell_y = t\ell=m
$$
is the generic preimage size of the branched cover $\pi: T \to S$. As there are $m$ lifts of the reference edge of $S$ to $T$, considering the points
above the interior of the edge, we see that the total covering number is equal to $m$ and hence that $\ell_x, \ell_y$ and $\ell$ divide $m$.

We summarize what we have shown in the following theorem:

\begin{thm}
\label{thm: branchedcover}
Let $1 \to N \to \Gamma \overset{\pi}{\to} G \to 1$ be an extension of finite groups and suppose $x,y$ do not commute in $G$. Let $S$ be the component determined
by $[(x,1),(y,1)]$ in $\hat{Y}(G)$. Then the $|N|^2$-lifts of the edge $[(x,1),(y,1)]$ to $\hat{Y}(\Gamma)$ determine $k$ distinct components
$T_1, \dots T_k$ where $T_j$ contains $m_j$ of these edge lifts. Then: \\
(1) $\sum_{j=1}^k m_j = |N|^2$. \\
(2) If $T$ is a component containing $m$ lifts of the reference edge then $\pi$ induces a $m$-fold branched cover $T \to S$ which is branched only over
type 1 and 2 vertices in $S$ with at most three ramification indices denoted $\ell$ (over the type 2 vertices), $\ell_x$ and $\ell_y$ (over the at most two orbits of type 1 vertices) which are divisors of $m$. Furthermore if the Schl\"afli symbol
of $S$ is $\{n, \lambda_1\text{-}\lambda_2\}$ then the Schl\"afli symbol of $T$ is $\{n\ell, \lambda_1\ell_x\text{-}\lambda_2\ell_y\}$.
\end{thm}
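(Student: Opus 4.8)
The plan is to construct the branched covering $\pi\colon T\to S$ by bootstrapping the quotient homomorphism through the simplicial structure, and then to extract all local degrees from three "spiral staircase" computations: one around the midpoints of faces (type-2 vertices) and one around each of the at most two orbits of type-1 vertices. Part (1) is immediate from bookkeeping: the $|N|^2$ edge lifts $[(\hat{x}n_1,1),(\hat{y}n_2,1)]$ fall into the components $T_1,\dots,T_k$ that contain them, with $T_j$ receiving $m_j$ of them, so $\sum_{j=1}^k m_j=|N|^2$.

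For (2), I would fix a lift component $T$ with $m$ lift edges, choose one such edge as a reference, and obtain a reference triangle of $T$ mapping onto the reference triangle of $S$. The essential structural input is that, in any component, the two triangles adjacent to an edge are determined by algebraic relations among the labelling group elements, and these relations are preserved by $\pi$; propagating outward from the reference triangle by induction on combinatorial distance then extends $\pi$ to a surjective non-degenerate (face-to-face) simplicial map $T\to S$. I expect this extension step to be the main obstacle, since one must check it is independent of the propagation path. This is exactly where the hypothesis that $x,y$ do not commute in $G$ is used: if two faces of $T$ over a common face of $S$ shared an edge $[(w,1),(z,1)]$, then the midpoints $(wz,2)$ and $(zw,2)$ would have equal image, forcing the images of $w,z$ to commute in $G$, a contradiction; the same argument forbids two sheets over a type-1 vertex from sharing an edge. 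Hence distinct preimage faces (resp.\ sheets) meet only at vertices, which is what is needed for $\pi$ to be an honest covering away from a finite vertex set.

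Next I would run the three local computations. Walking around the midpoint $(\alpha,2)$ of a face of $T$, after $n$ triangles one lands on a triangle over the base triangle of the corresponding face of $S$ but not necessarily on the starting triangle; by finiteness there is a least $\ell$ with $n\ell$ triangles closing the loop, so the face type of $T$ is $n\ell$ and $\pi$ is $\ell$-to-$1$ on that face, branched only over the midpoint with ramification index $\ell$, and face transitivity makes $\ell$ uniform. The same argument applied to the sheet about $(\hat{x},1)$, which consists of $2\hat{\lambda}_1$ triangles, yields $\hat{\lambda}_1=\ell_x\lambda_1$ with $\pi$ branched over $(x,1)$ of index $\ell_x$; likewise $\hat{\lambda}_2=\ell_y\lambda_2$ over $(y,1)$ with index $\ell_y$. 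Since $Y(G)$-cell structures have at most two vertex orbits, these are all the branch points, so $\pi\colon T\to S$ is a branched cover with exactly the three claimed ramification indices.

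Finally I would pin down the global degree. If $t$ faces of $T$ lie over a given face of $S$, each contributes $\ell$ distinct lifts of the reference edge (the cell structures are closed, with no rim self-identifications), and these exhaust the $m$ lifts inside $T$, so $t\ell=m$; counting sheets over the two type-1 vertices similarly gives $t_x\ell_x=m=t_y\ell_y$. Thus the generic fiber of $\pi$, equivalently the set of points over the interior of the reference edge, has size $m$, so $\pi$ is an $m$-fold branched cover and $\ell,\ell_x,\ell_y$ divide $m$. Reading the face type and the two vertex valencies of $T$ off the three computations yields the Schl\"afli symbol $\{n\ell,\ \lambda_1\ell_x\text{-}\lambda_2\ell_y\}$, which completes the proof.
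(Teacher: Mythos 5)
Your proposal is correct and follows essentially the same route as the paper: the bookkeeping for (1), the inductive propagation of $\pi$ to a non-degenerate simplicial map $T\to S$, the three spiral-staircase computations giving $\ell,\ell_x,\ell_y$, the commuting-midpoints argument showing preimage faces and sheets meet only at vertices, and the counting identities $t\ell=t_x\ell_x=t_y\ell_y=m$ pinning down the degree and the Schl\"afli symbol. The only cosmetic difference is that you attribute the non-commuting hypothesis to path-independence of the propagation, whereas the paper invokes it to guarantee the target edges exist in $Y(G)$ and that distinct lift faces/sheets cannot share an edge; these are the same underlying facts.
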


Note that all the ramification that occurs can be deduced from the change in Schl\"afli symbols as we lift the component $S$ to the component $T$.
The value of $t=\frac{m}{\ell}$ can be computed from face counts as $F'=tF$ where $F'$ is the number of faces in $T$ and $F$ is the number of faces in $S$. Further note that edge and vertex counts are related by $E'=mE$, $V_1'=\frac{m}{\ell_x}V_1, V_2'=\frac{m}{\ell_y}V_2$.

\subsection{Central Extensions}

A little more can be said in the case of central extensions
$$
1 \to C \to \Gamma \overset{\pi}{\to} G \to 1.
$$
In the case of a component $S$ in $G$ with two orbits of vertices under conjugation, we find the component lifts $T_{c_i,c_j}, c_i,c_j \in C$ have the same property.
Fix $c, c' \in C$. We can define a map $\Psi_{c,c'}$ on vertices which takes any type 1 vertex in the first orbit and multiplies it by $c$ and any
type 1 vertex in the second orbit and multiplies it by $c'$ and finally takes any type 2 vertex and multiplies it by $cc'$. Notice this map takes the
vertices $[(\alpha,1),(\beta,1),(\alpha\beta,2)]$ to $[(\alpha c,1), (\beta c',1), (\alpha\beta cc',2)]$ and hence induces a well-defined simplicial map
of the union of the $T_{c_i,c_j}$ to itself which we will call the ``central twist" $\Psi_{c,c'}$. In fact $C \times C$ acts on the union of $T_{c_i,c_j}$ via
these maps transitively permuting these components amongst themselves. Thus the genus and cell type of each of these lift components is the same
and hence each of them contain the same number $m$ of lifts of the reference edge. If there are $k$ distinct lifts then $mk=|C|^2$.

Furthermore when considering the ramification picture over a $n$-gon $2$-cell of $S$, one finds upon traversing along a lift sheet in $T$ for $n$ triangle steps
starting at $[(\hat{x},1),(\hat{y},1),(\hat{x}\hat{y},2)]$ one returns to a triangle of the form $[(\hat{x}c,1), (\hat{y}c^{-1},1),(\hat{x}\hat{y},2)]$ where $c \in C$
is a ``monodromy element". A simple computation using that movement along the sheet rim is accomplished by conjugations, then finds that the face type in $T$, which is a $n\ell$-gon, will have $\ell$, the ramification index
equal to the order of this monodromy element. Thus in particular $\ell$ divides the exponent of the group $C$. Similar considerations work for
$\ell_x$ and $\ell_y$.

On the other hand if $S$ has a single conjugation orbit of vertices, so can its lifts $T_{c_i,c_j}$ and so we can no longer distinguish into two kinds of type 1 vertices.
Thus only the central twists of the form $\Psi_{c,c}$ are still well-defined. These yield an action of $C$ on the lifts $T_{c_i,c_j}$ which is no longer transitive but breaks up into $|C|$ many groupings of components each containing $|C|$ lifts of the reference edge. Within each grouping, the components must have the same genus, cell-structure and branching structure over $S$.  If $k_i$ denotes the number of components in one of the groupings and $m_i$ the common covering number
then $k_im_i=|C|$. The arguments about monodromy still work so $\ell$ and $\ell_x$ still divide the exponent of $C$.

We record these observations in the next theorem:

\begin{thm}
\label{thm: central branched cover}
Let $1 \to C \to \Gamma \to G \to 1$ be a central extension of finite groups. Let $S$ be a component of $\hat{Y}(G)$, then in addition to the results of
Theorem~\ref{thm: branchedcover} one has: \\
(1) If $S$ has two orbits of vertices under conjugation then all the components lying above $S$ in $\hat{Y}(\Gamma)$ have the same genus and cell type
and branching structure over $S$. If there are $k$ distinct such components and each yields a $m$-fold branched cover over $S$ then $mk=|C|^2$.
The ramification indices $\ell, \ell_x$ and $\ell_y$ divide the exponent of $C$. \\
(2) If $S$ has a single orbit of vertices under conjugation then the components lying above $S$ break up into $|C|$ groupings. Within each grouping
all components have the same genus, cell-structure and branching structure. If $m_i$ and $k_i$ denote the covering number and number of components in a
given grouping then we have $m_ik_i=|C|$. The ramification indices $\ell, \ell_x$ still divide the exponent of $C$.
\end{thm}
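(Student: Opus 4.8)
The plan is to take Theorem~\ref{thm: branchedcover} as given — it already provides, for each component $T$ of $\hat Y(\Gamma)$ lying over $S$, a branched cover $\pi\colon T\to S$ and the relation between Schl\"afli symbols — and to add two ingredients that exploit the centrality of $C$: a symmetry group permuting the lift components (the ``central twists''), and a monodromy analysis identifying the ramification indices with orders of central elements.

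First I would make the central twist $\Psi_{c,c'}$ precise. Fixing the reference triangle $[(\hat x,1),(\hat y,1),(\hat x\hat y,2)]$, when $S$ has two vertex orbits the type~1 vertices of the lift components split into those lying over the orbit of $(x,1)$ and those lying over the orbit of $(y,1)$; define $\Psi_{c,c'}$ on $0$-cells by multiplying the first kind by $c$, the second kind by $c'$, and every type~2 vertex by $cc'$. The one thing to check is that this respects incidences: a triangle $[(\alpha,1),(\beta,1),(\alpha\beta,2)]$ is sent to $[(\alpha c,1),(\beta c',1),(\alpha\beta cc',2)]$, which is again a triangle since $(\alpha c)(\beta c')=\alpha\beta cc'$ by centrality, and the conjugation relations that govern when two triangles are glued along an edge are unaffected by central factors, so adjacency is preserved. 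Hence $\Psi_{c,c'}$ is a simplicial automorphism of $\bigcup_{c_i,c_j}T_{c_i,c_j}$ satisfying $\pi\circ\Psi_{c,c'}=\pi$ (central factors die in $G$), and $(c,c')\mapsto\Psi_{c,c'}$ is an action of $C\times C$. On the set of lifts of the reference edge, $[(\hat xc_i,1),(\hat yc_j,1)]$, this action is simply transitive, so $C\times C$ permutes the distinct components $T_1,\dots,T_k$ transitively; since these automorphisms lie over $S$ they preserve genus, cell type and ramification data, so all $T_j$ agree in these respects, each contains the same number $m$ of reference-edge lifts, and counting the $|C|^2$ lifts gives $mk=|C|^2$. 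This yields part~(1) apart from the divisibility claim. For part~(2), a single vertex orbit in $S$ makes the ``first kind/second kind'' distinction meaningless, so only the diagonal twists $\Psi_{c,c}$ survive; the resulting $C$-action on reference-edge lifts is $(c_i,c_j)\mapsto(c_ic,c_jc)$, which is free with $|C|$ orbits. I would take the groupings to be the orbits of the induced $C$-action on $\{T_1,\dots,T_k\}$ (using that distinct components are disjoint surfaces, so a stabilizer computation shows the edge-lifts of a grouping split cleanly over its components): within a grouping, transitivity again forces common genus, cell structure, branching behaviour and a common covering number $m_i$, and a count gives $k_i$ components per grouping with $k_im_i=|C|$, with exactly $|C|$ groupings in all.

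The remaining and, I expect, hardest point is that the ramification indices divide the exponent of $C$. Here I would revisit the ``spiral staircase'' of Theorem~\ref{thm: branchedcover}: lift the $n$-gon $2$-cell of $S$ surrounding a type~2 vertex, start at the triangle $[(\hat x,1),(\hat y,1),(\hat x\hat y,2)]$ of $T$, and walk $n$ triangles around the central vertex; this projects to one full loop around the corresponding midpoint of $S$, so one returns over $[(x,1),(y,1),(xy,2)]$ but, in $T$, to a triangle $[(\hat xc,1),(\hat yc^{-1},1),(\hat x\hat y,2)]$. The element $c$ lies in $N=C$ because $\hat xc$ and $\hat x$ have the same image in $G$, it is central by hypothesis, and it is independent of the chosen starting face because the passage between adjacent triangles along a sheet is a fixed conjugation rule, so tracking it and simplifying with centrality shows the net monodromy depends only on $S$, not the path. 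Iterating, after $r$ loops one is at $[(\hat xc^r,1),(\hat yc^{-r},1),(\hat x\hat y,2)]$, so the sheet in $T$ first closes up after exactly $\mathrm{ord}(c)$ loops; comparing with Theorem~\ref{thm: branchedcover}, which says $T$ has face type $n\ell$, forces $\ell=\mathrm{ord}(c)$, hence $\ell$ divides the exponent of $C$. Running the same argument for the $2\lambda_1$-triangle sheet about a type~1 vertex $(\hat x,1)$ (and likewise $(\hat y,1)$ in the two-orbit case) produces monodromy elements of $C$ of orders $\ell_x$ and $\ell_y$, which therefore also divide the exponent of $C$. The main obstacle is precisely this monodromy bookkeeping together with the orbit count in part~(2): one must show the monodromy is a single well-defined central element whose order is exactly the ramification index, rather than merely an element that happens to close the staircase after $\ell$ loops — once that is isolated, the rest is Theorem~\ref{thm: branchedcover} plus elementary counting for the abelian group $C\times C$ acting on the lift components.
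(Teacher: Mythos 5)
Your proposal is correct and follows essentially the same route as the paper: the central twists $\Psi_{c,c'}$ giving a transitive $C\times C$-action (respectively the diagonal $C$-action in the one-orbit case) to establish the counting identities and uniformity of the lift components, plus the spiral-staircase monodromy element $c\in C$ with $[(\hat{x}c,1),(\hat{y}c^{-1},1),(\hat{x}\hat{y},2)]$ whose order equals the ramification index, forcing divisibility by the exponent of $C$. The paper's argument is presented at the same level of detail, so no further comparison is needed.
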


%\begin{figure}[htp]
%\centering
%\caption{Corresponding sheets in a 3-fold branch cover $T \to S$.}
%\includegraphics[width=.5 \textwidth]{spiral}
%\label{fig: spiral}
%\end{figure}

\subsection{Example: Extension of $\Sigma_3$ to $\Sigma_4$}

Here we present the data in theorem \ref{thm: branchedcover} for
$$
1 \to N \to \Sigma_4 \overset{\pi}{\to} \Sigma_3 \to 1,
$$
where $N=\{(), (12)(34), (13)(24), (14)(23)\} \cong \mathbb{Z}_2 \times \mathbb{Z}_2$.
From tables \ref{table:Sigma3} and \ref{table:Sigma4}, we see there are two (genus 0) components of $\hat{Y}(\Sigma_3)$, which we refer to by their face count and Schl\"afli symbol as $\{3, 2\}^2$ and $\{4,2\text{-}3\}^3$ (the superscript refers to the face count). We similarly refer to the components of $\hat{Y}(\Sigma_4)$.

The $\{3, 2\}^2$ component of $\hat{Y}(\Sigma_3)$ is covered by six components of $\hat{Y}(\Sigma_4)$:
\begin{center}
\begin{tikzcd}[column sep=5em]
\{6,2\text{-}4\}^8 \arrow{rd}{(\ell,\ell_x,\ell_y)=(2,1,2)} & \\[.5cm]
\{3,4\}^8 \arrow{r}{(\ell,\ell_x,\ell_y)=(1,2,2)} & \ \ \{3,2\}^2 \\[.5cm]
\{3,2\}^2 \text{ (four of this type)} \arrow{ru}[swap]{(\ell,\ell_x,\ell_y)=(1,1,1)} &
\end{tikzcd}
\end{center}
On the left are the components in $\hat{Y}(\Sigma_4)$ which map under $\pi$ to the component of $\hat{Y}(\Sigma_3)$ on the right. The ramification indices $(\ell,\ell_x,\ell_y)$ are those referenced in theorem~\ref{thm: branchedcover} and along with face counts of the components, determine all the relevant data. For example, there are four $(1,1,1)$ coverings which are simply one-to-one (no branching), while the $(1,2,2)$ covering is branched over all type 1 vertices (but not type 2). The face counts tell us, for instance, that for the $(2,1,2)$ covering, there are $t=4$ faces in the $\hat{Y}(\Sigma_4)$ component over each face in the $\hat{Y}(\Sigma_3)$ component (since an 8 face component maps to a 2 face component). As expected, there are $|N|^2=16$ total edge lifts of each edge in the $\hat{Y}(\Sigma_3)$ component. Indeed, the $(2,1,2)$ covering accounts for 8 of them, the $(1,2,2)$ covering accounts for 4, and the $(1,1,1)$ coverings each account for 1.

The $\{4,2\text{-}3\}^3$ component of $\hat{Y}(\Sigma_3)$ is covered by seven components of $\hat{Y}(\Sigma_4)$:

\begin{center}
\begin{tikzcd}[column sep=9em]
\{8,2\text{-}3\}^6 \arrow{rdd}{(\ell,\ell_x,\ell_y)=(2,1,1)} & \\
\{8,3\text{-}4\}^6 \arrow{rd}[swap]{(\ell,\ell_x,\ell_y)=(2,1,2)} &  \\
& \ \ \{4,2\text{-}3\}^3 \\
\{4,3\text{-}4\}^{12} \arrow{ru}{(\ell,\ell_x,\ell_y)=(1,1,2)} &  \\
\{4,2\text{-}3\}^3 \text{ (four of this type)} \arrow{ruu}[swap]{(\ell,\ell_x,\ell_y)=(1,1,1)} &
\end{tikzcd}
\end{center}

Note that for all other components of $\hat{Y}(\Sigma_4)$ there is no corresponding surface in $\hat{Y}(\Sigma_3)$. For these components of $\hat{Y}(\Sigma_4)$, each edge $[(\hat{x},1),(\hat{y},1)]$ ``maps to a commuting pair'' under $\pi$.

\subsection{Example: Central Extension of $PSL(2,\mathbb{F}_3)$ to $SL(2,\mathbb{F}_3)$}

Here we present the data in theorem \ref{thm: central branched cover} for the central extension
$$
1 \to C \to SL(2,\mathbb{F}_3) \overset{\pi}{\to} PSL(2,\mathbb{F}_3) \to 1.
$$
From tables \ref{table:SL2F3} and \ref{table:Alt4}, we see there are five (genus 0) components of $\hat{Y}(PSL(2,\mathbb{F}_3))$, which we refer to by their face count and Schl\"afli symbol as in the previous example. Here, the only branching occurs over the two $\{3,3\}^4$ components of $\hat{Y}(PSL(2,\mathbb{F}_3))$ given below:

\begin{center}
\begin{tikzcd}[column sep=7em]
\{6,3\}^4\ \ \arrow{rd}{(\ell,\ell_x,\ell_y)=(2,1,1)} & \\[1cm]
\{3,3\}^4\ \ \arrow{r}{(\ell,\ell_x,\ell_y)=(1,1,1)} & \ \{3,3\}^4 \\[1cm]
\{3,3\}^4\ \ \arrow{ru}[swap]{(\ell,\ell_x,\ell_y)=(1,1,1)} &
\end{tikzcd}
\hskip .5cm
\begin{tikzcd}[column sep=7em]
\{6,3\}^4 \ \ \arrow{rd}{(\ell,\ell_x,\ell_y)=(2,1,1)} & \\[1cm]
\{3,3\}^4 \ \ \arrow{r}{(\ell,\ell_x,\ell_y)=(1,1,1)} & \ \{3,3\}^4 \\[1cm]
\{3,3\}^4 \ \ \arrow{ru}[swap]{(\ell,\ell_x,\ell_y)=(1,1,1)} &
\end{tikzcd}
\end{center}

The remaining three components of $\hat{Y}(PSL(2,\mathbb{F}_3))$ are covered in a one-to-one fashion, i.e. $(\ell,\ell_x,\ell_y)=(1,1,1)$, by isomorphic components in $\hat{Y}(SL(2,\mathbb{F}_3))$.

Note that for all other components of $\hat{Y}(SL(2,\mathbb{F}_3))$ there is no corresponding surface in $\hat{Y}(PSL(2,\mathbb{F}_3))$. For these components of $\hat{Y}(SL(2,\mathbb{F}_3))$, each edge $[(\hat{x},1),(\hat{y},1)]$ ``maps to a commuting pair'' under $\pi$.

\section{Application: Group Actions on Surfaces}
\label{sec: mapping class}

By functoriality, $Aut(G)$ acts on $Y(G)$ via orientation preserving simplicial automorphisms. Using standard equivalences between the PL and smooth categories in dimension $2$, we can view this action as one through orientation preserving diffeomorphisms on the corresponding smoothened surfaces constituting $Y(G)$. If $S$ is one of the surface components, then $Aut(G)$ permutes this component amongst others. All components in the same $Aut(G)$-orbit
as the component $S$ have to have the same genus and cell-structure type. Let us denote the component stabilizer as $Aut(G)_S$, thus $Aut(G)_S$ consists
of the elements of $Aut(G)$ which map the component $S$ back to itself. The index of $Aut(G)_S$ in $Aut(G)$ gives the number of components in the component orbit of $S$ and
$Aut(G)_S$ acts on the surface $S$ itself via orientation preserving simplicial automorphisms.

By connectedness and the fact that the action is orientation preserving, the elements of $Aut(G)_S$ that fix a reference edge pointwise are precisely the elements that
act trivially on the whole component. Modding out this subgroup, we obtain a faithful, orientation preserving, action on the component $S$ thru cellular
automorphisms. We summarize these facts in the next theorem:

\begin{thm}
\label{thm: actions}
Let $G$ be a finite nonabelian group and let $S$ be a component of $Y(G)$ of genus $g$ containing a reference edge $[(x,1),(y,1)]$. Then $Aut(G)$ acts on $\hat{Y}(G)$ permuting components and the orbit of
$S$ consists of components with the same genus and cell-structure type. The stabilizer $Aut(G)_S$ consists of the elements of $Aut(G)$ which map $S$ back
to itself and this subgroup acts on $S$ via orientation preserving cell-automorphisms (diffeomorphisms in the corresponding smoothened surface). Then: \\
(1) The index of $Aut(G)_S$ in $Aut(G)$ is no more than the number of components in $\hat{Y}(G)$ which have the same genus and cell-structure as $S$. \\
(2) If $Aut_x$ and $Aut_y$ denote the stabilizers of the vertices $(x,1)$ and $(y,1)$ respectively then $Aut_x \cap Aut_y$ is the kernel of the
action of $Aut(G)_S$ on $S$ and hence is a normal subgroup of $Aut(G)_S$. \\
(3) Let $Q_S$ be defined by the short exact sequence
$$
1 \to Aut_x \cap Aut_y \to Aut(G)_S \to Q_S \to 1
$$
then $Q_S$ is a sub-quotient of $Aut(G)$ and $Q_S$ acts faithfully on the surface of genus $g$ via orientation preserving diffeomorphisms
which preserve the cell-structure of the component $S$. Thus $Q_S$ embeds into $Diff^+(X_g)$ the group of orientation preserving diffeomorphisms
of the closed surface of genus $g$. We will call the group $Q_S$ the sub-quotient of $Aut(G)$ associated to the component $S$. \\
(4) As the component $S$ is edge transitive, the set-stablizers of edges form a conjugacy class. The set-stabilizer of our reference edge in
$Q_S$ will be denoted by $Q_e$. Thus $Q_e$ is the subgroup of elements of $Q_S$ which take the edge $[(x,1),(y,1)]$ back to itself as a set.
Then $|Q_e| \leq 2$. \\
(5) All vertex stabilizers (of type $1$ vertices) are conjugate to $Q_x$ and/or $Q_y$, where $Q_x$ and $Q_y$ denote the stabilizers in $Q_S$ of the vertex $(x,1)$
and $(y,1)$ respectively. $Q_x \cap Q_y = \{ 1 \}$. Let $\lambda_1$ and $\lambda_2$ denote the valencies of the vertices $(x,1)$ and $(y,1)$ then $Q_x$ and $Q_y$ are cyclic groups of order $\lambda_1$ and $\lambda_2$ generated by
$c_x$ and $c_y$ respectively where $c_t$ denotes conjugation by $t$. \\
(6) As the component $S$ is face transitive in the closed cell-structure, the set-stabilizers of faces in the closed cell structure of the component form a conjugacy class. The set-stabilizer in $Q_S$ of a reference face containing
the triangle \\ $[(x,1), (y,1), (xy,2)]$ will be denoted $Q_F$. $Q_F$ is also the vertex stabilizer of the type $2$ vertex $(xy,2)$ in the center of that face. If the faces of $S$ in the cell-structure are $n$-gons, then $Q_F$ is a cyclic group of order either $n$ or $\frac{n}{2}$. \\
(7) The group $Q_S$ acts on the surface $S$ with cyclic stabilizer groups. The only points which can have nontrivial stabilizer groups are
vertices of type $1$ or vertices of type $2$ or midpoints of edges joining two type 1 vertices. In all cases $Q_S$ acts freely (with trivial stabilizer groups), preserving orientation, on the
unit-tangent bundle $T_g$ of $X_g$, the surface of genus $g$. $T_g$ is a compact, connected, orientable $3$-manifold.
\end{thm}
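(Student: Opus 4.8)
The plan is to establish the seven parts in order, with (1)--(3) following formally from the orbit--stabilizer theorem together with the fact, noted in the paragraph preceding the statement, that an element of $Aut(G)_S$ fixing a reference edge pointwise acts trivially on all of $S$. By functoriality the $Aut(G)$-orbit of $S$ consists of components sharing the genus and cell-structure type of $S$, so this orbit has at most as many members as there are such components, and since $[Aut(G):Aut(G)_S]$ equals the orbit size we get (1). An element of $Aut_x\cap Aut_y$ fixes both endpoints of the reference edge, hence fixes that edge pointwise, hence acts trivially on $S$; conversely an element acting trivially on $S$ fixes $(x,1)$ and $(y,1)$. So $Aut_x\cap Aut_y$ is exactly the kernel of the $Aut(G)_S$-action on $S$, giving (2), and (3) is then the definition of the quotient $Q_S$ combined with the standard equivalence of the PL and smooth categories in dimension two, which upgrades the faithful simplicial action to a faithful orientation-preserving smooth action, i.e. an embedding $Q_S\hookrightarrow\mathrm{Diff}^+(X_g)$.

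For (4) and the first assertions of (5) I would use the same ``fix the edge pointwise $\Rightarrow$ trivial'' principle, now inside the faithful group $Q_S$. The set-stabilizer $Q_e$ maps to the symmetric group on the two endpoints of the reference edge; the kernel of this map is the set of elements of $Q_S$ fixing $(x,1)$ and $(y,1)$, hence fixing the edge pointwise, hence trivial, so $Q_e$ embeds in a group of order $2$, proving $|Q_e|\le 2$; the identical argument gives $Q_x\cap Q_y=\{1\}$. Edge- and face-transitivity of $S$ (recalled from Part I) give that every type $1$ vertex stabilizer is conjugate to $Q_x$ or $Q_y$, that every face stabilizer is conjugate to $Q_F$, and that $Q_F$ is the stabilizer of the face-center $(xy,2)$.

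The heart of the argument is the cyclicity and order computations in (5) and (6), resting on the local combinatorics of $X(G)$ from Part I. The link of $(x,1)$ inside $S$ is a combinatorial circle alternating between $\lambda_1$ type $2$ vertices and $\lambda_1$ type $1$ vertices; $Q_x$ acts on it, and I would check the action is \emph{free}: an element fixing a type $1$ link-vertex $(b,1)$ fixes the edge $[(x,1),(b,1)]$ pointwise, hence is trivial, while an element fixing a type $2$ link-vertex but no type $1$ link-vertex would have to interchange the two type $1$ vertices flanking it in the link, which reverses orientation of the link circle, a contradiction. An orientation-preserving free action on a circle is by rotations, so $Q_x$ is cyclic of order dividing $\lambda_1$. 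Finally I would identify $c_x$ (the inner automorphism by $x$; it fixes $x$, hence $(x,1)$, and carries the reference triangle $[(x,1),(y,1),(xy,2)]$ to the triangle $[(x,1),(xyx^{-1},1),(xy,2)]$ adjacent to it across $[(x,1),(xy,2)]$, so it stabilizes $S$) with the rotation of the link by one type $1$ step; hence its image in $Q_x$ has order exactly $\lambda_1$ and $Q_x=\langle c_x\rangle\cong\mathbb{Z}/\lambda_1$. For (6) the analogous picture at the face-center $(xy,2)$ has link the $n$-cycle of boundary type $1$ vertices $a_1,\dots,a_n$ of the reference face, with $a_ia_{i+1}=xy$ and hence $a_{i+2}=(xy)^{-1}a_i(xy)$; $Q_F$ again acts freely on these $n$ vertices (an element fixing one of them fixes the whole face, hence a type $1$--type $1$ edge pointwise), hence is cyclic of order dividing $n$, and the recursion shows $c_{xy}$ acts as the two-step rotation, of order $n$ or $n/2$ according as $n$ is odd or even; moreover when $S$ has two vertex orbits $n$ is even and $Q_F$ must preserve the two vertex types, forcing $|Q_F|\le n/2$. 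Either way $Q_F$ is cyclic of order $n$ or $n/2$. I expect the delicate point of the whole proof to be extracting from Part I the exact combinatorial recursion around a type $2$ vertex and confirming the step sizes of $c_x$ and $c_{xy}$ on the respective links, since everything else is then forced.

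For (7) I would first show the only points of $X_g$ with nontrivial $Q_S$-stabilizer are type $1$ vertices, type $2$ vertices, and midpoints of type $1$--type $1$ edges. An element fixing an interior point of a $2$-simplex fixes that simplex setwise; since automorphisms of $G$ preserve vertex type and the action is orientation-preserving, the induced permutation of its three vertices must be the identity, so the simplex is fixed pointwise and the element is trivial. An element fixing an interior point of a type $1$--type $2$ edge fixes that edge pointwise, hence fixes an adjacent $2$-simplex pointwise, hence is trivial. An element fixing an interior point of a type $1$--type $1$ edge either fixes it pointwise (trivial) or swaps its endpoints, fixing only the midpoint. The stabilizers at the remaining points are conjugates of $Q_x$, $Q_y$ (cyclic by (5)), of $Q_F$ (cyclic by (6)), or of $Q_e$ (order $\le 2$, hence cyclic, by (4)), so all point stabilizers of the $Q_S$-action are cyclic. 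Passing to the unit tangent bundle $T_g$: a nontrivial finite-order orientation-preserving diffeomorphism fixing a point $p$ has derivative at $p$ of finite order in $GL_2^+(\mathbb{R})$, hence conjugate to a nontrivial rotation (if the derivative fixed a nonzero vector it would be the identity, and a finite-order diffeomorphism whose derivative at a fixed point is the identity must be the identity on the connected surface $X_g$), so it fixes no unit tangent vector at $p$. Since every nontrivial element of $Q_S$ has a fixed point, $Q_S$ acts freely on $T_g$, preserving the natural orientation of $T_g$ induced from those of the base and the circle fibers; as an $S^1$-bundle over the compact connected orientable surface $X_g$, $T_g$ is a compact connected orientable $3$-manifold, which completes the proof.
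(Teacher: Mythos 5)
Your proof is correct, and for parts (1)--(4) and (7) it follows essentially the same line as the paper: orbit--stabilizer for (1), the ``orientation-preserving and fixes an edge pointwise $\Rightarrow$ fixes the two adjacent triangles $\Rightarrow$ trivial by connectedness'' principle for (2)--(4), and the same case analysis on interiors of triangles, type $1$--type $2$ edges, and type $1$--type $1$ edges for (7) (your Bochner-linearization argument for freeness on the unit tangent bundle is actually more careful than the paper's one-line assertion). Where you genuinely diverge is in (5) and (6): the paper argues directly that any element of $Q_x$ (resp.\ $Q_F$), after composition with a suitable power of $c_x$ (resp.\ $c_{xy}$), fixes the reference edge pointwise and hence is trivial, so the stabilizer equals $\langle \bar c_x\rangle$ (resp.\ is generated by a one-step or two-step rotation of the face); you instead first establish cyclicity structurally, by showing the stabilizer acts faithfully and orientation-preservingly on the link circle of the vertex and therefore embeds in a finite rotation group, and only then invoke the Part I fact that $c_x$ permutes the faces around $(x,1)$ transitively to pin down the order as exactly $\lambda_1$. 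Both routes rest on the same Part I inputs (the transitivity of $c_x$ on incident faces and the combinatorial recursion $a_{i+2}=(xy)^{-1}a_i(xy)$ around a type $2$ vertex), but your version cleanly separates ``cyclic'' from ``which generator and what order,'' and your observation in (6) that the $n$ versus $n/2$ dichotomy is governed by the parity of $n$ and by whether the two vertex types can be interchanged is a small refinement of the paper's existence-of-$\tau'$ case split.

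Two harmless slips worth fixing: the $c_x$-image of the reference triangle $[(x,1),(y,1),(xy,2)]$ is $[(x,1),(xyx^{-1},1),(x\cdot xy\cdot x^{-1},2)]$, not $[(x,1),(xyx^{-1},1),(xy,2)]$ (the latter is the triangle adjacent to the reference triangle across $[(x,1),(xy,2)]$, but it is not the image); the conclusion you need --- that $c_x$ fixes $(x,1)$ and stabilizes $S$ --- still holds, since the image triangle is joined to the reference triangle by a chain of edge-adjacencies, and is in any case part of the recalled Part I fact. And in (7) the clause ``since every nontrivial element of $Q_S$ has a fixed point'' is both unnecessary and not obviously true for $g\geq 2$; freeness on $T_g$ already follows from the sentence before it, because fixing a unit tangent vector at $q$ forces fixing $q$.
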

\begin{proof}
The initial part of the statement of the theorem and (1) follow from the comments preceding the theorem. Thus we will concentrate on proving (2)-(7). \\
Proof of (2) and (3): First observe that an orientation preserving simplicial automorphism of a triangulated surface that fixes an edge pointwise, must fix
all points in the two triangles adjacent to that edge as it preserves orientation. As the elements of $Aut_x \cap Aut_y$ fix the reference edge pointwise,
a connectedness argument shows that it fixes all triangles in the component pointwise and hence acts trivially in the component. On the other
hand elements not in $Aut_x \cap Aut_y$ move either the vertex $(x,1)$ or vertex $(y,1)$ and hence act nontrivially on the component.
Thus $Aut_x \cap Aut_y$ is exactly the kernel of the action of $Aut(G)_S$ on $S$. (2) and (3) follow. \\
Proof of (4): A simplicial map which takes the reference edge back to itself as a set either fixes the end vertices, in which case it becomes the trivial element
in $Q_S$, or it flips them. If $\tau$ is such an automorphism which takes $x$ to $y$ and vice-versa, then any other element which does the same differs
from $\tau$ by an element which fixes the component pointwise and hences projects to the same thing as $\tau$ does in $Q_S$. Thus $Q_e$ is cyclic
generated by $\bar{\tau}$, the image of $\tau$, and $\bar{\tau}$ clearly has order two. \\
Proof of (5): All type 1 vertices in $S$ are conjugate to a vertex in the reference edge. $Q_x$ is exactly the image of $Aut_x$ in $Q_S$ and similarly
$Q_y$ is the image of $Aut_y$. As $Aut_x \cap Aut_y$ was the kernel of the action, $Q_x \cap Q_y = \{ 1 \}$. Recall we saw that $c_x$, conjugation by $x$,
permuted the $2$-cell faces incident to $(x,1)$ transitively and that there were $\lambda_1$ $2$-cell faces in the orbit where $\lambda_1$ was the valency of $x$.
If $\gamma$ is an automorphism of the component $S$ which takes $(x,1)$ to itself, then $\gamma$ must induce a permutation of the faces incident to
$(x,1)$. Thus the composition of $\gamma$ with a suitable power of $c_x$ will fix the reference edge $[(x,1),(y,1)]$ pointwise and hence induce the trivial
automorphism of the component. Thus in $Q_S$, $\bar{\gamma}$, the image of $\gamma$, lies in the cyclic group generated by $\bar{c}_x$.
Thus $Q_x$ is cyclic and is generated by the image of $c_x$ in $Q_S$ and has order $\lambda_1$. Similar arguments work for $y$ in place of $x$.
This proves (5). \\
Proof of (6): Let $\tau$ denote an automorphism that fixes the reference face setwise. Then the composition of $\tau$ and a suitable power of the conjugation $c_{xy}$ takes this face to itself and the reference edge either to itself or an adjacent edge in the same face. The latter case need only be considered if the
reference edge and the adjacent edge are not $xy$-conjugate. If this latter case exists then there exists an automorphism $\tau'$ which fixes the face
and takes the reference edge to an adjacent edge. The cyclic group generated by $\tau'$ then acts transitively on the edges of the face. Any other automorphism which fixes the face is readily seen to be equal to the composite of a power of $\tau'$ and an automorphism which fixes the reference edge pointwise.
Thus the image of $\tau'$ generates $Q_F$ and $Q_F$ is a cyclic group of order $n$. On the other hand, if there is no automorphism of the component
which takes the reference edge to an adjacent edge, then any automorphism that fixes the face setwise is equal to the composite of a power of $c_{xy}$ and
an automorphism which acts trivially on the component. In this case, $Q_F$ is cyclic generated by $c_{xy}$ and has order $\frac{n}{2}$. \\
Proof of (7): Recall that any element of $Aut(G)_S$ fixing an edge of $S$ pointwise, must act trivially on all of $S$ and hence project to the trivial element
in $Q_S$. Thus in the faithful $Q_S$ action on $S$ with its $2$-cell structure, any nontrivial element that fixes an edge (between two type 1 vertices) setwise must flip the end-vertices. Thus the only points with nontrivial stabilizers on such an edge are the midpoint (stabilizer is at most a cyclic group of order 2) and
the two end vertices (cyclic stabilizers of order $\lambda_1$ and $\lambda_2$). Considering the triangulation structure underlying the $2$-cell structure of the
component, each triangle is of the form $[(a,1),(b,1),(ab,2)]$ and can only be taken to itself by an automorphism that fixes $(ab,2)$. Such an automorphism
either fixes the edge $[(a,1),(b,1)]$ pointwise in which case it induces the trivial automorphism of $S$ or it flips it. However flipping an edge and taking
the triangle back to the same triangle violates orientation preservingness. Thus no nontrivial element of $Q_S$ fixes any triangle as a set and this shows
that $Q_F$, the face stabilizer of the $n$-gon face has no nontrivial element fix anything besides the midpoint of the face $(ab,2)$.
Putting this all together, we see that the only points with nontrivial stabilizers in the $Q_S$ action on $S$ are the type 1 vertices, the midpoints of the $2$-cell faces (type 2 vertices) and possibly the midpoints on the edges (between type $1$ vertices). Each of these stabilizer groups has been shown to be cyclic.
Finally in the corresponding smooth action, it is clear that any nontrivial element of $Q_S$ that fixes a point in $S$, acts nontrivially on the tangent space
of that point. This translates to the corresponding smooth action of $Q_S$ on the unit tangent bundle of $S$ having trivial stabilizers, i.e, being a free action.
The unit tangent bundle over $X_g$ is the total space of an oriented circle bundle over $X_g$ and hence is a compact, connected, oriented, $3$-manifold.
\end{proof}

Theorem~\ref{thm: actions} goes through similarly when the group $G$ is used in place of $Aut(G)$. The group $G$ acts on itself through
conjugations (inner automorphisms). Given a component $S$ in $\hat{Y}(G)$ a subgroup $G_S$ of $G$ will stabilize that component and act on it
through orientation preserving cellular automorphisms. The kernel of this action is $C(x) \cap C(y)$ and thus one obtains a subquotient
$$
1 \to C(x) \cap C(y) \to G_S \to \bar{Q}_S \to 1.
$$
Thus $\bar{Q}_S$ acts faithfully on $S$ through orientation preserving cellular automorphisms.
As the component is edge and face transitive under the conjugation action already, there is only a small difference between this subquotient $\bar{Q}_S$ of $G$
associated to the component $S$ and the subquotient $Q_S$ of $Aut(G)$ associated to the component $S$. Clearly $\bar{Q}_S$ embeds as a
subgroup of $Q_S$ consisting of those automorphisms of the surface induced by inner automorphisms. The (type 1) vertex stabilizers are isomorphic,
i.e., $Q_x = \bar{Q}_x$ and $Q_y = \bar{Q}_y$ while the edge and face stabilizers $\bar{Q}_e$ and $\bar{Q}_F$ are index $\leq 2$ subgroups of
$Q_e$ and $Q_F$ respectively. Note that this means the stabilizer groups of the $\bar{Q}_S$ action on $S$ are cyclic also and that this action also yields a
free orientation preserving action on the unit tangent bundle of $S$.

Recall that the unit tangent bundle $U(X_g)$ of the closed surface of genus $g$, $X_g$, is the total space of an oriented circle bundle over $X_g$.
When $g=0$, $U(X_0)$ is diffeomorphic to $SO(3)$ or equivalently real projective $3$-space $\mathbb{R}P^3$. When $g=1$, the fact that the torus
is parallelizable yields that $U(X_1)$ is diffeomorphic to the $3$-torus $T^3 = S^1 \times S^1 \times S^1=K(\mathbb{Z}^3,1)$. For $g \geq 2$,
$U(X_g)$ is a $K(\pi,1)$ space where $\pi$ is a $3$-dimensional Poincare duality group which fits in a short exact sequence
$$
1 \to \mathbb{Z} \to \pi \to \pi(X_g) \to 1
$$
where $\pi(X_g)$ is the fundamental group of the surface of genus $g$ given by the presentation
$< a_1, b_1, a_2, b_2, \dots, a_g, b_g | [a_1,b_1][a_2,b_2]\dots[a_g,b_g] = 1 >$.

If $Q$ is one of the subquotients given by Theorem~\ref{thm: actions} (or the equivalent when $Aut(G)$ is replaced with $G$) acting freely on $U(X_g)$
then we get a covering space $U(X_g) \to U(X_g)/Q $ where the orbit space $U(X_g)/Q$ is a closed connected oriented $3$-manifold.
For $g \geq 1$, $U(X_g)/Q$ and $U(X_g)$ have no higher homotopy and hence have fundamental groups which are $3$-dimensional
Poincare duality groups and in particular are torsion free (see \cite{Brown}).
Thus by basic covering space theory one gets a short exact sequence
$$
1 \to \pi_1(U(X_g)) \to \pi_1(U(X_g)/Q) \to Q \to 1
$$
which exhibits $Q$ as a quotient of one $3$-dimensional Poincare duality group by another.

Another remark is that the size of a finite group of orientation preserving homeomorphisms of $X_g$, the closed surface of genus $g$, is bounded
in general by $84(g-1)$ when $g \geq 2$. This upper bound can be
achieved for infinitely many $g$ but also cannot be achieved for infinitely many $g$. A Riemann surface with a finite conformal automorphism group achieving this bound is called a Hurwitz surface. This result was proven first by Hurwitz for conformal maps and then generalized to the case of homeomorphisms in \cite{TT}.
For more on these issues and for a list of open questions regarding these surfaces see \cite{F}. Thus for any subquotient $Q$
associated to a component of genus $g \geq 2$ in the construction $\hat{Y}(G)$, we can conclude that $|Q| \leq 84(g-1)$. We record this as a
corollary:

\begin{cor}[Hurwitz bound]
Let $Q$ be a subquotient associated to a component of $\hat{Y}(G)$ of genus $g \geq 2$. Then $|Q| \leq 84(g-1)$.
This upper bound is only achieved for genus $g$ where a Hurwitz surface exists.
\end{cor}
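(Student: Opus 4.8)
The plan is to obtain the corollary as an immediate consequence of Theorem~\ref{thm: actions} together with the generalized Hurwitz bound recalled in the paragraphs preceding the corollary. First I would recall that, by Theorem~\ref{thm: actions}(3) (or its analogue in which $G$ replaces $Aut(G)$), the subquotient $Q$ associated to a component $S$ of genus $g$ acts faithfully on $X_g$ via orientation preserving diffeomorphisms; in particular $Q$ embeds as a finite subgroup of the group of orientation preserving homeomorphisms of $X_g$.

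Next, assuming $g \ge 2$, I would invoke the topological Hurwitz bound: a finite group of orientation preserving homeomorphisms of the closed surface $X_g$ has order at most $84(g-1)$ when $g \ge 2$. This is the theorem of \cite{TT} generalizing Hurwitz's classical bound for conformal automorphisms, and it is precisely the statement quoted in the discussion above. (Alternatively one could average an arbitrary Riemannian metric on $X_g$ over the finite group $Q$, pass to the associated $Q$-invariant conformal structure, and apply Hurwitz's original theorem; but citing \cite{TT} avoids this detour and is exactly why the topological version is the relevant one here.) Applying this bound to the faithful image of $Q$ in the homeomorphism group of $X_g$ yields $|Q| \le 84(g-1)$.

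Finally, for the equality clause, I would argue that if $|Q| = 84(g-1)$ then $X_g$ carries a group of $84(g-1)$ orientation preserving homeomorphisms, which — after averaging to a compatible complex structure as above, if one wishes to phrase it conformally — is exactly the defining property of a Hurwitz surface of genus $g$; hence a Hurwitz surface of that genus exists. Equivalently, for genus $g$ admitting no Hurwitz surface the inequality $|Q| \le 84(g-1)$ is necessarily strict. I do not expect any genuine obstacle: the entire content is packaged in Theorem~\ref{thm: actions} and the cited bound. The only point that needs care is that one must use the homeomorphism (rather than merely conformal) form of the Hurwitz bound, since the cellular automorphisms of $S$ furnished by the functorial construction are a priori only diffeomorphisms and are not conformal for any preferred complex structure on $S$ — and this is exactly what the result of \cite{TT} supplies.
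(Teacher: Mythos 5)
Your proposal is correct and follows exactly the paper's route: the corollary is obtained by combining the faithful orientation preserving action of $Q$ on $X_g$ from Theorem~\ref{thm: actions} with the topological Hurwitz bound of \cite{TT}, and the equality clause reduces to the definition of a Hurwitz surface. Nothing further is needed.
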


We will next prove that if $Q$ is a subquotient acting on a component $S$ arising in the construction $\hat{Y}(G)$, then the orbit quotient map $S \to S/Q$ is a branched covering over the sphere, branched (or equivalently ramified) over exactly $3$ branch points.

\begin{thm}[Components are branched over the Riemann Sphere]
\label{thm: ram}
Let $G$ be a finite nonabelian group and let $S$ be a component occurring in $\hat{Y}(G)$
with corresponding sub quotient $Q$ and $|E|$ edges. Let a representative edge of $S$ be $[(x,1),(y,1)]$. Then: \\
(1) The quotient map $\pi: S \to S/Q$ is a branched $|Q|$-fold covering map, branched over exactly three points. $S/Q$ is homeomorphic to the $2$-sphere. \\
(2) In the case that the component consists of two $Q$-orbits of vertices, then the three branch points in $S/Q$ consist of the image of $(x,1)$, the image of $(y,1)$
and the image of the center of faces with ramification indices $\lambda_1, \lambda_2$ and $\frac{n}{2}$ respectively (note $\lambda_1=\lambda_2$ is possible in this case also). In this case,
$Q$ is a subgroup of index $\leq 2$ of the orientation preserving automorphisms of the cell structure of $S$ and $|Q|=|E|$. In the non-equivar case
$\lambda_1 \neq \lambda_2$, $Q$ is the whole group of orientation preserving automorphisms of the cell-structure of $S$. \\
(3) In the case that the component consists of a single $Q$-orbit of vertices, then the three branch points in $S/Q$ consist of the image of $(x,1)$, the image
of the midpoints of edges, and the image of the center of faces with ramification indices $\lambda, 2$ and $n$ respectively. In this case, $Q$ is the group of orientation preserving automorphisms of the cell structure of $S$ and $|Q|=2|E|$.
\end{thm}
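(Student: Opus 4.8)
The plan is to treat $\pi\colon S\to S/Q$ as a branched covering, extract its ramification data directly from the cell combinatorics, and then obtain the genus of $S/Q$ from Riemann--Hurwitz. Since $Q$ acts faithfully on the compact oriented surface $S$ by orientation-preserving cellular homeomorphisms, $S/Q$ is automatically a closed oriented surface and $\pi$ is a branched covering of degree $|Q|$ whose branch points are exactly the images of the points of $S$ with nontrivial stabilizer; over such an image every preimage has ramification index equal to the order of that stabilizer, which is cyclic by Theorem~\ref{thm: actions}. By Theorem~\ref{thm: actions}(7) the only points that can have a nontrivial stabilizer are the type $1$ vertices, the type $2$ vertices (face centers) and the midpoints of edges, and a midpoint has nontrivial stabilizer only when $|Q_e|=2$. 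Face transitivity puts the type $2$ vertices into a single $Q$-orbit with stabilizer $Q_F$, edge transitivity puts the edge midpoints into a single $Q$-orbit with stabilizer $Q_e$, and the type $1$ vertices form one or two $Q$-orbits: this is the dichotomy of (2) vs.\ (3). Since $\lambda_1,\lambda_2\ge 2$ and $n\ge 3$ (so $|Q_F|\ge 2$), what remains is to count the type $1$ orbits and to determine $|Q_e|$ and $|Q_F|$ in each case.

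The key combinatorial input is the local picture of a face: around the type $2$ vertex $(xy,2)$ the $n$ rim vertices are $(a_0,1),\dots,(a_{n-1},1)$ with $a_0=x$, $a_1=y$, $a_{k+1}=a_k^{-1}(xy)$, hence $a_{k+2}=(xy)^{-1}a_k(xy)$; thus $c_{xy}\in Q_F$ rotates a face by two rim steps, and the even- and the odd-indexed rim vertices each span a single $Q$-orbit. I would also use from Part~I that $Q_x=\langle c_x\rangle$ permutes the $\lambda_1$ faces at $(x,1)$, hence also the $\lambda_1$ edges and the $\lambda_1$ darts there, cyclically. In the two-orbit case the even and the odd rim vertices lie in the two distinct orbits, so $n$ is even; moreover no element of $Q$ may interchange the two orbits, so $Q$ contains no edge flip. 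Hence $|Q_e|=1$ (the edge midpoints are not branch points, and $E=|Q|$) and a rotation of a face by one rim step is impossible, which by Theorem~\ref{thm: actions}(6) forces $|Q_F|=n/2$. This yields exactly the three branch points of (2), with ramification indices $\lambda_1,\lambda_2,n/2$. In the one-orbit case, transitivity of $Q$ on type $1$ vertices together with the cyclic action of $c_x$ on the darts at $(x,1)$ gives transitivity of $Q$ on all $2|E|$ darts; the dart stabilizer is trivial (an orientation-preserving cellular automorphism fixing an edge pointwise is the identity), so $|Q|=2|E|$, and since $|Q|=|E|\,|Q_e|$ we get $|Q_e|=2$. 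Dart transitivity together with orientation-preservation also yields an element of $Q_F$ rotating a face by one rim step, so $|Q_F|=n$. This yields exactly the three branch points of (3), with ramification indices $\lambda,2,n$.

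It remains to identify $Q$ with the group $Aut^+(S)$ of orientation-preserving automorphisms of the cell structure and to compute $\chi(S/Q)$. For the first: $Q\le Aut^+(S)$, the group $Aut^+(S)$ is edge transitive (it contains $Q$), and its edge stabilizer has order $\le 2$ by the same pointwise-fixing argument, so $|Aut^+(S)|\le 2|E|$; in the one-orbit case $|Q|=2|E|$ forces $Q=Aut^+(S)$, while in the two-orbit case $|Q|=|E|$ gives $[Aut^+(S):Q]\le 2$, with equality to $Aut^+(S)$ when $\lambda_1\ne\lambda_2$ since a cell automorphism preserves vertex valency and so cannot flip an edge. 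For the second: the cell counts of $S$ follow from the transitive $Q$-action (for instance $E=|Q|$, $V_1=|Q|/\lambda_1+|Q|/\lambda_2$, $F=2|Q|/n$ in the two-orbit case, with the analogous counts and $|Q|=2|E|$ in the one-orbit case), so $\chi(S)=V_1-E+F$ is known; substituting this and the ramification data into $\chi(S)=|Q|\,\chi(S/Q)-\sum_{p\in S}(e_p-1)$ yields $\chi(S/Q)=2$, whence $S/Q\cong S^2$. I expect the real obstacle to be precisely the case bookkeeping that pins down $|Q_e|$ and $|Q_F|$ — that the two-orbit cell carries \emph{no} extra symmetry (no edge flip, only two-step face rotations) while the one-orbit cell carries the \emph{full} dart symmetry — for which the rim-vertex orbit description and the cyclic $c_x$-action from Part~I are the essential ingredients; granting the ramification triple, the sphere statement and the identification with $Aut^+(S)$ are routine.
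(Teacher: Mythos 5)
Your proposal is correct and follows essentially the same route as the paper: identify the possible branch points from Theorem~\ref{thm: actions}, split into the two cases according to whether the type~$1$ vertices form one or two $Q$-orbits (equivalently, whether an edge flip exists), read off the ramification indices $\lambda_1,\lambda_2,\tfrac{n}{2}$ or $\lambda,2,n$, and then deduce $\chi(S/Q)=2$ from Riemann--Hurwitz combined with the Euler characteristic of $S$ computed from the cell counts. The only differences are cosmetic: you re-derive the Euler characteristic formula from the orbit counts rather than citing Theorem~2.32 of Part~I, and you write out the one-orbit case that the paper leaves to the reader.
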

\begin{proof}
By Theorem~\ref{thm: actions}, outside the finite number of points with nontrivial stabilizer groups, the $Q$-action is free and the map $\pi: S \to S/Q$ is a covering map. Thus it is clear $\pi$ is a $|Q|$-fold branched cover, branched only over the images of orbits with nontrivial stabilizer and $S/Q$ is hence a orientable,
compact, connected surface of genus $\bar{g}$ to be determined.

Furthermore note that there are two $Q$-orbits of vertices in $S$ (in the cell-structure so we are only discussing "type 1" vertices) if and only if
there is no element of $Q$ flipping an edge. This is because if an element of $Q$ flips an edge, as we know the action is edge transitive, it follows it is also
vertex transitive. On the other hand, if $Q$ acts vertex transitively, by composing the element of $Q$ that maps $x$ to $y$ with a suitable power of $y$-conjugations (which permute all the edges incident to $(y,1)$), we will arrive at an element of $Q$ which flips the edge $[(x,1),(y,1)]$.

The rest of the proof, breaks up into two cases, one where the $Q$-action on vertices is transitive and one where it has two orbits. We will do the two orbit case
and leave the easier one orbit case to the reader as the proof is similar.

In the case that the $Q$-action on vertices has two orbits, $Q_e=\{ 1 \}$ as no element of $Q$ can flip an edge. Thus there are exactly $3$ orbits with nontrivial stabilizers, that of $(x,1)$, that of $(y,1)$ and that of the midpoint of $2$-cell faces, for example $(xy,2)$. The corresponding ramification indices (size of stabilizer groups) are $|Q_x|=\lambda_1$, $|Q_y|=\lambda_2$ and
$|Q_F|=\frac{n}{2}$ by Theorem~\ref{thm: actions}. As $Q$ acts faithfully and edge transitively, without edge flips, $|Q|=|E|$ and $Q$ can have at most index $\leq 2$ in the group of orientation preserving
automorphisms of the cell-structure of $S$. (The only possible nontrivial coset of $Q$ being that by an automorphism flipping an edge.)

The Riemann-Hurwitz formula applied to the branched cover $\pi: S \to S/Q$ yields:

\begin{eqnarray*}
2-2g &=& |Q|(2-2\bar{g}) - (|Q|-\frac{|Q|}{\lambda_1}) - (|Q|-\frac{|Q|}{\lambda_2}) - (|Q|-\frac{2|Q|}{n}) \\
&=& |Q|(\frac{1}{\lambda_1} + \frac{1}{\lambda_2} + \frac{2}{n}-1-2\bar{g})
\end{eqnarray*}

On the other hand using $|Q|=|E|$ in Theorem 2.32 of \cite{HPY} yields
$$
2-2g =|Q|(\frac{1}{\lambda_1} + \frac{1}{\lambda_2}+ \frac{2}{n}-1)
$$
Comparing this with the equation above it we conclude $\bar{g}=0$ i.e. $S/Q$ is a sphere. This completes the proof.
\end{proof}

Note as a consequence of Theorem~\ref{thm: ram}, each component $S$ is a branched cover $S \to S/Q$ over the sphere, branched over exactly $3$ points. As the sphere has a unique complex structure (that of the Riemann sphere), and its conformal automorphism group is sharply $3$-transitive, we can assume
those $3$ points are $0, 1$ and $\infty$. Then, away from the finite number of points with nontrivial $Q$-stablizers in $S$, the complex structure of the sphere will lift to a unique complex structure on $S$ such that the deck transformations in $Q$ act via conformal maps. One of course has to discuss the lifting of the complex structure about the ramified points also but as this is not the focus of this paper, we will not do so here. However this shows that the various $Q$-actions on the individual components indeed connect with the Fuchsian group constructions obtained via complex analysis classically for the group $Q$ and that one
can indeed picture these components as genuine Riemann surfaces being acted on by $Q$ via conformal automorphisms.

As another consequence, the subquotient $Q$ associated to a given component $X$ either for the $G$ or $Aut(G)$ action must be a group generated by two elements.
This follows from considering the $Q$-covering over the sphere minus the three branch points which yields a short exact sequence:
$$
1 \to \pi_1(X-\text{ramified points}) \to \pi_1(S^2-\text{3 points}) \to Q \to 1
$$
and the fact that $\pi_1(S^2-\text{3 points})=\pi_1(\mathbb{R}^2-\text{2 points})$ is a free group on two generators.

Let us now look at some examples of the actions constructed via the functor $\hat{Y}(G)$.

\subsection{Genus zero actions}

In \cite{TT} the concept of strong symmetric genus of a finite group was introduced. The strong symmetric genus of a finite group $G$
is the minimal genus $g \geq 0$ such that $G$ acts faithfully via orientation preserving homeomorphisms on the closed surface of genus $g$.
A complete classification of groups with a given strong symmetric genus has been obtained for genus $g \leq 25$ in the papers
\cite{MZ1}, \cite{MZ3}, \cite{FTX}. There exist a finite, nonzero number of finite groups (up to isomorphism) of any given strong symmetric genus $g \geq 2$ (see \cite{MZ2}).

Note if $Q$ is one of the sub quotients corresponding to a component $S$ of genus $g$ obtained via Theorem~\ref{thm: actions},
then the strong symmetric genus of $Q$ is $\leq g$. We will now use our construction to recover all finite groups of strong symmetric genus $0$.
Though this list is classical and well known, we point out that this construction hands these actions to us without any need for inspiration. The group builds
the surface canonically via the functor $Y(G)$, the surface thus constructed comes with labels of all vertices by group elements and we merely have to calculate how the group acts on itself via conjugation to visualize the resulting action on the surface. Thus basically the construction removes any need for special inspiration to find these actions.

\begin{example}[Dihedral Groups]
Let $D_{2n}$ be the dihedral group of order $2n$ where $n \geq 3$ and let $\tau$ be a generator of the cyclic subgroup of "rotations". Then as discussed in Section 3.1 of \cite{HPY}, there is a sphere component
with Schl\"afli symbol $\{n,2\}$ in $\hat{Y}(D_{2n})$ where the north and south pole of the sphere are $(\tau,2)$ and $(\tau^{-1},2)$ and where
all of the $n$ reflections of $D_{2n}$ are laid out along the equator. The upper and lower hemispheres form the two $n$-gon cells in the corresponding cell structure.
The stabilizer of this component under the conjugation action is the whole group $D_{2n}$ and the corresponding sub quotient $Q$ is the quotient
of $D_{2n}$ by its center which is $D_{2m}$ where $m=n$ if $n$ is odd and $m=\frac{n}{2}$ if $n$ is even.

Picturing this sphere as the canonical sphere of radius $1$ centered about the origin in $\mathbb{R}^3$, it is easy to compute that conjugation by $\tau$ yields a
rotation by $\frac{2\pi}{m}$ radians about the axis going thru the poles. Conjugation by a reflection $\sigma$ yields a rotation by $\pi$ radians about an axis
going through the equator of the sphere.  The resulting action is a faithful, orientation preserving action of $D_{2m}$ on the sphere which exhibits
$D_{2m}, m \geq 2$ as a subgroup of $SO(3)$ and shows that all dihedral and cyclic groups (which occur as the rotation subgroups) have strong symmetric genus zero.
\end{example}

In the next example, the explicit construction involved in the functor $\hat{Y}$ would construct the necessary example and action explicitly but we will bypass explicit analysis and argue indirectly to just show existence.

\begin{example}[Tetrahedron]
Looking at table~\ref{table:Alt4} which lists the components in $\hat{Y}(A_4)$ we find two components of genus $0$ and Schl\"afli symbol $\{3,3\}$
which are hence isomorphic as cell complexes to the tetrahedron. $A_4$ acts on these two components by conjugation and the stabilizer of one of them
is a subgroup of index $\leq 2$ i.e., $A_4$ or a subgroup of order $6$. As $A_4$ has no subgroups of order $6$, $A_4$ must be the component stabilizer.
As the tetrahedron has $4$ faces, the quotient $Q$ corresponding to that component, which acts transitively on these faces must have $4 | |Q|$ which forces $Q=A_4$ as $A_4$ has no
quotients of order exactly $4$. Thus $Q=A_4$ is contained in the orientation preserving automorphism group of the tetrahedron. On the other hand, the automorphism group of the tetrahedron
embeds into $\Sigma_4$ by considering the induced action on vertices of the tetrahedron, the orientation preserving automorphism group is hence a subgroup of
$A_4$. Putting these two faces together, we see that $A_4$ is the group of orientation preserving automorphisms of the tetrahedron
and this action occurs in the construction $\hat{Y}$.

Using a regular tetrahedron centered at the origin, one hence finds $A_4$ displayed as a subgroup of the rotation group $SO(3)$.
\end{example}

Before the next examples, we record a simple lemma about the construction $\hat{Y}(G)$ for simple groups $G$.

\begin{lem}
\label{lem:simple}
Let $G$ be a nonabelian simple group and let $S$ be a component of $\hat{Y}(G)$ whose type (genus and cell structure) occurs $k$ times in
$\hat{Y}(G)$ where $k! < |G|$. Then the sub quotient $Q$ associated to the component $S$ is $G$. If the component is non-equivar then $G=Q$ is the
group of orientation preserving automorphisms of the cell-structure of $S$.
\end{lem}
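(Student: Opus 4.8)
The key observation is that $\Aut(G)$ acts on $\hat Y(G)$ permuting the $k$ components of a given type (genus + cell structure), so we get a homomorphism $\Aut(G) \to \Sigma_k$. The plan is to show this homomorphism is trivial, which forces $\Aut(G)$ — and in particular its subgroup of inner automorphisms $G/Z(G) = G$ (as $G$ is nonabelian simple, hence centerless) — to stabilize the component $S$. Once $G$ stabilizes $S$, the subquotient $\bar Q_S$ of $G$ acting faithfully on $S$ is $G/(C(x)\cap C(y))$; since $G$ is simple and $C(x)\cap C(y)$ is a proper subgroup (it is proper because $x,y$ do not commute, so e.g. $y\notin C(x)\cap C(y)$ while $y$ is not central), the kernel $C(x)\cap C(y)$ is a \emph{normal} subgroup properly contained in $G$, hence trivial. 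Therefore $\bar Q_S = G$, and since $\bar Q_S \leq Q_S$ with both sitting inside the orientation-preserving automorphism group of the cell structure, we get $Q_S = G$ as well.

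First I would make the triviality argument precise. The action of $\Aut(G)$ on the set of $k$ components of the given type gives $\rho\colon \Aut(G)\to \Sigma_k$. If $\rho$ were nontrivial, its image would be a nontrivial subgroup of $\Sigma_k$, and the kernel $K=\ker\rho$ would be a proper subgroup of $\Aut(G)$ of index dividing $k!$. But $K$ contains $\operatorname{Inn}(G)\cong G$ only if $\operatorname{Inn}(G)\subseteq K$; the point is rather that $\rho$ restricted to $\operatorname{Inn}(G)$ is a homomorphism $G\to\Sigma_k$, whose kernel is a normal subgroup of $G$ of index dividing $k! < |G|$. Since $G$ is simple and $|G| > k!$, this kernel cannot be proper, so it is all of $G$: every inner automorphism stabilizes $S$. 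Hence $G = \operatorname{Inn}(G) \leq \Aut(G)_S$, which is exactly the statement that $G$ (acting by conjugation) stabilizes the component $S$, so $G_S = G$ in the notation following Theorem~\ref{thm: actions}.

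Next, with $G_S = G$, the short exact sequence $1\to C(x)\cap C(y)\to G\to \bar Q_S\to 1$ from the discussion after Theorem~\ref{thm: actions} exhibits $\bar Q_S$ as a quotient of the simple group $G$ by the normal subgroup $C(x)\cap C(y)$. This subgroup is proper: $x$ and $y$ do not commute (this holds for any reference edge in the construction), so $y$ does not centralize $x$, whence $y\notin C(x)$ and a fortiori $y\notin C(x)\cap C(y)$ — but $y\neq 1$ is a genuine element of $G$, so $C(x)\cap C(y)\subsetneq G$. By simplicity $C(x)\cap C(y) = \{1\}$, so $\bar Q_S \cong G$. Finally $\bar Q_S$ embeds in $Q_S$, which in turn embeds in the (finite) group of orientation-preserving cellular automorphisms of $S$; if the component is non-equivar, Theorem~\ref{thm: ram}(2) identifies $Q_S$ with the full orientation-preserving automorphism group of the cell structure, and since $G = \bar Q_S \leq Q_S$ and no larger group is available, we conclude $G = \bar Q_S = Q_S$ equals that full automorphism group.

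The main obstacle I anticipate is purely bookkeeping: being careful about the distinction between the subquotient $Q_S$ of $\Aut(G)$ and the subquotient $\bar Q_S$ of $G$, and making sure the hypothesis $k! < |G|$ is used in the right place — it is needed precisely to rule out a nontrivial homomorphism $G\to\Sigma_k$, not for anything about $C(x)\cap C(y)$, which vanishes purely by simplicity and nonabelianness. One small point worth stating explicitly is that $G$ nonabelian simple implies $Z(G)=\{1\}$, so $\operatorname{Inn}(G)\cong G$ and the conjugation action really does give a copy of $G$; this is where "nonabelian" (as opposed to just "simple", which would allow $\mathbb{Z}/p$) is used.
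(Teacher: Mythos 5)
Your argument is correct and is essentially the paper's own proof: the paper likewise deduces $G_S=G$ from the permutation action of the simple group $G$ on the (at most $k$) cosets of $G_S$ together with $k!<|G|$, then gets $Q=G$ from simplicity plus the fact that the kernel $C(x)\cap C(y)$ of the action on $S$ is proper, and cites Theorem~\ref{thm: ram} for the non-equivar claim. Your only additions are minor elaborations the paper leaves implicit (the explicit reason $C(x)\cap C(y)\neq G$, and the bookkeeping identifying $\bar Q_S$ with $Q_S$ in the non-equivar case).
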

\begin{proof}
$G_S$ is an index $\leq k$ subgroup of $G$. Considering the action of $G$ on the left cosets of $G_S$, we either have $G_S=G$ or the kernel
of the action is a proper normal subgroup of $G$ and hence is trivial. Thus in the latter case, $G$ embeds in $\Sigma_k$ which is a contradiction as $k! < |G|$. Thus $G=G_S$. Again by simplicity and the fact that $Q \neq 1$, we now see that $Q=G$. The rest follows from Theorem~\ref{thm: ram}.
\end{proof}

\begin{example}[Octahedron and Cube]
Looking at table~\ref{table:Sigma4}, we find a unique component of $\hat{Y}(\Sigma_4)$ of genus $0$ and Schl\"afli symbol $\{3,4\}$ which is isomorphic as
a cell complex to the octahedron. As the component is unique and has $12$ edges the corresponding sub quotient $Q$ must equal $\Sigma_4$ as
$12 | |Q|$ and $\Sigma_4$ has no normal subgroups of order $2$. Thus $\Sigma_4$ embeds in the group of orientation preserving automorphisms
of the octahedron. However the octahedron has $12$ edges and $|\Sigma_4|=2|E|$, thus $\Sigma_4$ is the
group of orientation preserving automorphisms of the octahedron and this action occurs in the construction $\hat{Y}$.

Using a regular octahedron centered at the origin, one hence finds $\Sigma_4$ displayed as a subgroup of $SO(3)$.

Similarly from the table, there is a unique component in $\hat{Y}(\Sigma_4)$ of genus $0$ and Schl\"afli symbol $\{4,3\}$ which is isomorphic to the cube.
Analogous arguments show that this component exhibits $\Sigma_4$ as the group of orientation preserving symmetries of the cube (which of course also follows as the cube and octahedron are dual complexes).

\end{example}

\begin{example}[Dodecahedron and Icosahedron]
Looking at table~\ref{table:Alt5}, we find two components of $\hat{Y}(A_5)$ of genus $0$ and Schl\"afli symbol $\{5,3\}$ which are isomorphic to
dodecahedra. As $A_5$ is simple, the corresponding sub quotient is $Q=A_5$ by lemma~\ref{lem:simple}. As $|A_5|=60=2|E|$ we conclude that $A_5$ is the group of orientation preserving symmetries of the dodecahedron
and this action occurs in the construction $\hat{Y}$.

As usual, using a regular dodecahedron centered at the origin displays $A_5$ as a subgroup of $SO(3)$.

The table also has two components isomorphic to icosahedra and analogous arguments show $A_5$ is the group of orientation
preserving symmetries of the icosahedron and that this action arises in $\hat{Y}(A_5)$.

\end{example}

The preceding examples display cyclic groups, dihedral groups $D_{2m} (m \geq 2)$, $A_4, \Sigma_4$ and $A_5$ as finite groups with strong symmetric genus $0$.
It is known that this is the complete list, in particular all finite subgroups of $SO(3)$ are isomorphic to one of these groups.

\subsection{Extraspecial groups}

Let $p$ be an odd prime and $G$ be the extra special group of order $p^3$ and exponent $p$. We saw in Section 3.3  of \cite{HPY} that all the components in $\hat{Y}(G)$ are isomorphic and are regular, of genus $g=\frac{p(p-3)}{2}+1$, Schl\"afli symbol $\{2p,p\}$ and $p^2$ edges.
Let $S$ be one of these components, it follows from the previous analysis that the component stabilizer under the $G$ conjugation action is all of $G$ itself and the kernel of the resulting
action is $Z$, the center of $G$. Thus the resulting sub quotient $\bar{Q}=G/Z \cong C_p \times C_p$ acts
faithfully on $S$ where $C_p$ denotes the cyclic group of order $p$. This is not too interesting as  the strong symmetric genus of the elementary abelian group $\bar{Q}$ is actually $1$, thus aside from the case $p=3$ this does not display an example of an action achieving the strong symmetric genus.

If we use the action of $Aut(G)$ on $S$ instead, it is not hard to show that the resulting sub quotient $Q$ acting faithfully via orientation
preserving homeomorphisms on $S$ is $Q = (C_p \times C_p) \rtimes C_2$ where the action of $C_2$ on $C_p \times C_p$ in the semi direct product
is by coordinate exchange. As $|Q|=2p^2=2|E|$, $Q$ is the group  of orientation preserving automorphisms of the corresponding regular cell-structure on $S$.
It follows that the strong symmetric genus of $(C_p \times C_p) \rtimes C_2 \cong C_p \times D_{2p}$ is no greater than $\frac{p(p-3)}{2}+1$ for all odd primes $p$.
By comparison with the lists in \cite{FTX} we find that this is the strong symmetric genus for $p=3,5,7$. We do not know if one also has equality for
$p \geq 11$.

\subsection{Miscellaneous examples}

By table~\ref{table:PSL2F7}, the simple group $PSL(2,\mathbb{F}_7)$ which is isomorphic to $PSL(3,\mathbb{F}_2)$ has two components in $\hat{Y}$
which are surfaces of genus $3$, Schl\"afli symbol $\{14,2\text{-}3\}=D\{7,3\}$ and $168$ edges.
By simplicity, and lemma~\ref{lem:simple}, $PSL(2,\mathbb{F}_7)$ must equal the corresponding sub quotient $Q$.
Also as the cell-structure is not equivar we see that
$PSL(2,\mathbb{F}_7)$ is the group of orientation preserving automorphisms of the cell structure. It also follows that it has strong symmetric
genus no more than $3$. A quick consultation of the lists in \cite{FTX} shows that the strong symmetric genus of $PSL(2,\mathbb{F}_7)$ is indeed $3$
and so the construction $\hat{Y}$ constructs such an action. Furthermore, as $84(g-1)=168=|PSL(2,\mathbb{F}_7)|$ this is an example which achieves
the Hurwitz bound. Indeed the Riemann surface given by the Klein quartic is a Hurwitz surface of genus $3$ with automorphism group
$PSL(2,\mathbb{F}_7)$ and is the smallest genus Hurwitz surface. Similar arguments also work for the genus $3$ components with cell-structure
$\{6, 2\text{-}7\}=D\{3,7\}$ and $\{4,3\text{-}7\}$ in the same table exhibiting $PSL(2,\mathbb{F}_7)$ as the orientation preserving automorphism group of at least three non-isomorphic cell structures on the surface of genus $3$.

In table~\ref{table:Alt7}, the simple group $A_7$ has $4$ components of genus $136$, Schl\"afli symbol $\{14,2\text{-}4\}=D\{7,4\}$ and $2520$ edges.
Under the conjugation action, by simplicity and lemma~\ref{lem:simple}, $A_7$
is equal to the associated sub quotient $\bar{Q}$ for one of these components. Furthermore as we are in the non-equivar case, we see that $A_7$ is the
group of orientation preserving automorphisms of the corresponding cell structure on the surface of genus $136$. In particular the strong symmetric
genus of $A_7$ is no more than $136$. Consulting the work of Conder (\cite{C}) who determined the strong symmetric genus of symmetric and alternating groups, we see that $136$ is indeed the strong symmetric genus of $A_7$. Similar arguments hold also for the cell structures with Schl\"afli symbol
$\{8,2\text{-}7\}=D\{4,7\}$ and $\{4,4\text{-}7\}$ in the same table.

\subsection{Mapping Class Groups}

If $G$ is a nonabelian finite group and $S$ a component of $\hat{Y}(G)$ of genus $g \geq 2$ then the sub quotient $Q$ associated to that component
acts faithfully, preserving orientation on $S$. Thus under a corresponding smoothing of the action, $Q$ embeds in the orientation preserving diffeomorphism
group of the closed surface of genus $g$, i.e., $Q \leq Diff^+(X_g)$. Recall that one has the short exact sequence of topological groups:
$$
1 \to Diff_0^+(X_g) \to Diff^+(X_g) \overset{\pi}{\to} Mod_g \to 1
$$
where $Diff_0^+(X_g)$ is the connected component of the identity in $Diff^+(X_g)$ and $Mod_g$, is the mapping class group of the closed surface of
genus $g$. $Mod_g$ is a discrete, finitely generated (and hence countable) group of importance in many areas of geometry.

It is known by a result of A. Borel that for any closed aspherical manifold $M$ whose fundamental group is centerless that $Diff_0^+(M)$ is torsion free, see for example \cite{FS}. Thus in particular
$Diff_0^+(X_g)$ is torsion free for $g \geq 2$ (but definitely not for $g=0,1$). Thus the quotient homomorphism $\pi$ yields an embedding of
$Q$ into the mapping class group $Mod_g$ for $g \geq 2$.

Furthermore considering the action of $Mod_g$ on $H_1(X_g)$ one finds a short exact sequence
$$
1 \to Tor_g \to Mod_g \to Symp(2g,Z) \to 1
$$
where $Tor_g$, the Torelli subgroup, consists of elements of the mapping class group $Mod_g$ that induce the identity map on homology while
$Symp(2g,Z)$ denotes the integral symplectic group i.e. the subgroup of $GL_{2g}(\mathbb{Z})$ that preserves the symplectic (intersection/cup product) form on $\mathbb{Z}^{2g}=H_1(X_g)$.

As the Torelli subgroup is also torsion free for $g \geq 2$, we find that $Q$ also embeds into $Symp(2g,Z)$.

Note that $Q$ acts on $H^*(X_g)$ trivially on $H^2$ and $H^0$ but via the embedding into $Symp(2g,Z)$ on $H^1$. Let us denote this $Q$-module by $M$.
Thus $M$ is a free abelian group of rank $2g$ on which $Q$ acts faithfully and via symplectic automorphisms. We will call $M$ the fundamental module of the sub quotient $Q$ associated to the component $S$.

A quick spectral sequence computation with the Serre spectral sequence for the circle bundle
$S^1 \to U(X_g) \to X_g$ shows that the unit tangent bundle has integral cohomology (when $g \geq 2$) given by
\begin{eqnarray*}
H^*(U(X_g)) &\cong&  \mathbb{Z} \text{ if } * =0, 3 \\
&\cong& \mathbb{Z}^{2g} \oplus \mathbb{Z}/(2g-2)\mathbb{Z} \text{ if } * = 2 \\
&\cong& \mathbb{Z}^{2g} \text{ if } * =1
\end{eqnarray*}

The associated free $Q$-action on $U(X_g)$ is trivial on $H^0, H^3$ and the torsion of $H^2$ and is equal to $M$ on $H^1$ and $H^2$ modulo torsion.
There are many cohomological consequences (for example arising from the Browder exponent theorem (\cite{Br}) applied to the $Q$-action on $U(X_g)$) but due to space limitations we will not pursue these here.

\bigskip

\noindent
Dept. of Mathematics \\
University of Rochester, \\
Rochester, NY 14627 U.S.A. \\
E-mail address: jonpak@math.rochester.edu \\

\bigskip

\noindent
Dept. of Mathematics \\
University of Rochester, \\
Rochester, NY 14627 U.S.A. \\
E-mail address: herman@math.rochester.edu \\

\newpage

\section{Appendix: Data for Cell Complexes of Several Groups}
\label{sec: appendix}

All data in the following tables was generated using SAGE.

%\begin{table}[htp]
%\centering
%\caption{$PSL(2,\mathbb{F}_3)$ - Cell Complexes of 5 Total Components}
%\begin{tabular}{|c|c|c|c|c|c|}
%\hline
% Genus & \# & Schl\"afli & \# & \# & \# Components of
%\\
%&Faces&Symbol&Vertices&Edges& This Type
%\\
%\hline
%0 & 4 & $\{3,3\}$ & 4 & 6 & 2
%\\
%(5 components) & 4 & $\{6,2\text{-}3\}$ & 10 & 12 & 2
%\\
% & 6 & $\{4,3\}$ & 8 & 12 & 1
% \\
%\hline
%\end{tabular}
%\label{table:PSL2F3}
%\end{table}

\begin{table}[htp]
\centering
\caption{$SL(2,\mathbb{F}_3)$ - Cell Complexes of 21 Total Components}
\begin{tabular}{|c|c|c|c|c|c|}
\hline
 Genus & \# & Schl\"afli & \# & \# & \# Components of
\\
&Faces&Symbol&Vertices&Edges& This Type
\\
\hline
0 & 2 & $\{4,2\}$ & 4 & 4 & 3
\\
(19 components) & 4 & $\{3,3\}$ & 4 & 6 & 4
\\
 & 4 & $\{6,2\text{-}3\}$ & 10 & 12 & 8
\\
 & 6 & $\{4,3\}$ & 8 & 12 & 4
 \\
\hline
1 & 4 & $\{6,3\}$ & 8 & 12 & 2
\\
(2 components) &&&&&
\\
\hline
\end{tabular}
\label{table:SL2F3}
\end{table}

%
%
%\vskip 1cm
%
%\begin{table}[htp]
%\centering
%\caption{$PSL(2,\mathbb{F}_5)$ - Cell Complexes of 91 Total Components}
%\begin{tabular}{|c|c|c|c|c|c|}
%\hline
% Genus & \# & Schl\"afli & \# & \# & \# Components of
%\\
%&Faces&Symbol&Vertices&Edges& This Type
%\\
%\hline
%0 & 2 & $\{3,2\}$ & 3 & 3 & 10
%\\
%(79 components) & 2 & $\{5,2\}$ & 5 & 5 & 12
%\\
%& 3 & $\{4,2\text{-}3\}$ & 5 & 6 & 10
%\\
%& 4 & $\{3,3\}$ & 4 & 6 & 10
%\\
% & 4 & $\{6,2\text{-}3\}$ & 10 & 12 & 10
%\\
%& 5 & $\{4,2\text{-}5\}$ & 7 & 10 & 12
%\\
% & 6 & $\{4,3\}$ & 8 & 12 & 5
% \\
%& 12 & $\{5,3\}$ & 20 & 30 & 2
% \\
%& 12 & $\{10,2\text{-}3\}$ & 50 & 60 & 2
% \\
%& 20 & $\{3,5\}$ & 12 & 30 & 2
% \\
%& 20 & $\{6,2\text{-}5\}$ & 42 & 60 & 2
% \\
%& 30 & $\{4,3\text{-}5\}$ & 32 & 60 & 2
% \\
%\hline
%4 & 12 & $\{5,5\}$ & 12 & 30 & 2
%\\
%(5 components) & 12 & $\{10,2\text{-}5\}$ & 42 & 60 & 2
%\\
%& 30 & $\{4,5\}$ & 24 & 60 & 1
%\\
%\hline
%5 & 20 & $\{6,3\text{-}5\}$ & 32 & 60 & 2
%\\
%(2 components) &&&&&
%\\
%\hline
%9 & 12 & $\{10,3\text{-}5\}$ & 32 & 60 & 4
%\\
%(5 components) & 20 & $\{6,5\}$ & 24 & 60 & 1
%\\
%\hline
%\end{tabular}
%\label{table:PSL2F5}
%\end{table}

\vskip 1cm

\begin{table}[htp]
\centering
\caption{$SL(2,\mathbb{F}_5)$ - Cell Complexes of 341 Total Components}
\begin{tabular}{|c|c|c|c|c|c|}
\hline
 Genus & \# & Schl\"afli & \# & \# & \# Components of
\\
&Faces&Symbol&Vertices&Edges& This Type
\\
\hline
0 & 2 & $\{3,2\}$ & 3 & 3 & 20
\\
(281 components) & 2 & $\{4,2\}$ & 4 & 4 & 15
\\
& 2 & $\{5,2\}$ & 5 & 5 & 24
\\
& 2 & $\{6,2\}$ & 6 & 6 & 10
\\
& 2 & $\{10,2\}$ & 10 & 10 & 12
\\
& 3 & $\{4,2\text{-}3\}$ & 5 & 6 & 40
\\
& 4 & $\{3,3\}$ & 4 & 6 & 20
\\
 & 4 & $\{6,2\text{-}3\}$ & 10 & 12 & 40
\\
& 5 & $\{4,2\text{-}5\}$ & 7 & 10 & 48
\\
 & 6 & $\{4,3\}$ & 8 & 12 & 20
 \\
& 12 & $\{5,3\}$ & 20 & 30 & 4
 \\
& 12 & $\{10,2\text{-}3\}$ & 50 & 60 & 8
 \\
& 20 & $\{3,5\}$ & 12 & 30 & 4
 \\
& 20 & $\{6,2\text{-}5\}$ & 42 & 60 & 8
 \\
& 30 & $\{4,3\text{-}5\}$ & 32 & 60 & 8
 \\
\hline
1 & 4 & $\{6,3\}$ & 8 & 12 & 10
\\
(10 components) &&&&&
\\
\hline
4 & 12 & $\{5,5\}$ & 12 & 30 & 4
\\
(16 components) & 12 & $\{10,2\text{-}5\}$ & 42 & 60 & 8
\\
& 30 & $\{4,5\}$ & 24 & 60 & 4
\\
\hline
5 & 12 & $\{10,3\}$ & 40 & 60 & 2
\\
(10 components) & 20 & $\{6,3\text{-}5\}$ & 32 & 60 & 8
\\
\hline
9 & 12 & $\{10,3\text{-}5\}$ & 32 & 60 & 16
\\
(22 components) & 20 & $\{6,5\}$ & 24 & 60 & 6
\\
\hline
13 & 12 & $\{10,5\}$ & 24 & 60 & 2
\\
(2 components) &&&&&
\\
\hline
\end{tabular}
\label{table:SL2F5}
\end{table}

\begin{table}[htp]
\centering
\caption{$SL(2,\mathbb{F}_7)$ - Cell Complexes of 1376 Total Components}
\begin{tabular}{|c|c|c|c|c|c|}
\hline
 Genus & \# & Schl\"afli & \# & \# & \# Components of
\\
&Faces&Symbol&Vertices&Edges& This Type
\\
\hline
0 & 2 & $\{3,2\}$ & 3 & 3 & 56
\\
(784 components) & 2 & $\{4,2\}$ & 4 & 4 & 42
\\
& 2 & $\{6,2\}$ & 6 & 6 & 28
\\
& 2 & $\{8,2\}$ & 8 & 8 & 42
\\
& 3 & $\{4,2\text{-}3\}$ & 5 & 6 & 112
\\
& 4 & $\{3,3\}$ & 4 & 6 & 56
\\
 & 4 & $\{4,2\text{-}4\}$ & 6 & 8 & 84
\\
 & 4 & $\{6,2\text{-}3\}$ & 10 & 12 & 112
\\
 & 6 & $\{4,3\}$ & 8 & 12 & 56
\\
 & 6 & $\{8,2\text{-}3\}$ & 20 & 24 & 56
\\
 & 8 & $\{3,4\}$ & 6 & 12 & 28
\\
 & 8 & $\{6,2\text{-}4\}$ & 18 & 24 & 56
 \\
& 12 & $\{4,3\text{-}4\}$ & 14 & 24 & 56
 \\
\hline
1 & 4 & $\{6,3\}$ & 8 & 12 & 28
\\
(92 components) & 7 & $\{6,3\}$ & 14 & 21 & 64
\\
\hline
3 & 3 & $\{14,3\}$ & 14 & 21 & 64
\\
(294 components) & 6 & $\{8,3\text{-}4\}$ & 14 & 24 & 56
\\
& 7 & $\{6,3\text{-}7\}$ & 10 & 21 & 128
\\
& 8 & $\{6,4\}$ & 12 & 24 & 14
\\
& 24 & $\{7,3\}$ & 56 & 84 & 4
\\
& 24 & $\{14,2\text{-}3\}$ & 140 & 168 & 8
\\
& 56 & $\{3,7\}$ & 24 & 84 & 4
\\
& 56 & $\{6,2\text{-}7\}$ & 108 & 168 & 8
\\
& 84 & $\{4,3\text{-}7\}$ & 80 & 168 & 8
\\
\hline
8 & 42 & $\{8,3\}$ & 112 & 168 & 8
\\
(24 components) & 56 & $\{6,3\text{-}4\}$ & 98 & 168 & 16
\\
\hline
10 & 24 & $\{7,4\}$ & 42 & 84 & 4
\\
(32 components) & 24 & $\{14,2\text{-}4\}$ & 126 & 168 & 8
\\
& 42 & $\{4,7\}$ & 24 & 84 & 4
\\
& 42 & $\{8,2\text{-}7\}$ & 108 & 168 & 8
\\
& 84 & $\{4,4\text{-}7\}$ & 66 & 168 & 8
\\
\hline
15 & 42 & $\{8,3\text{-}4\}$ & 98 & 168 & 8
\\
(12 components) & 56 & $\{6,4\}$ & 84 & 168 & 4
\\
\hline
17 & 24 & $\{14,3\}$ & 112 & 168 & 2
\\
(10 components) & 56 & $\{6,3\text{-}7\}$ & 80 & 168 & 8
\\
\hline
19 & 24 & $\{7,7\}$ & 24 & 84 & 4
\\
(14 components) & 24 & $\{14,2\text{-}7\}$ & 108 & 168 & 8
\\
& 84 & $\{4,7\}$ & 48 & 168 & 2
\\
\hline
22 & 42 & $\{8,4\}$ & 84 & 168 & 8
\\
(8 components) &&&&&
 \\
\hline
 &&&&&continued on next page
 \\
\hline
\end{tabular}
\label{table:SL2F7}
\end{table}

\clearpage

\begin{table}[htp]
\centering
\caption*{Table \ref{table:SL2F7}$: SL(2,\mathbb{F}_7)$ - continued}
\begin{tabular}{|c|c|c|c|c|c|}
\hline
 Genus & \# & Schl\"afli & \# & \# & \# Components of
\\
&Faces&Symbol&Vertices&Edges& This Type
\\
\hline
24 & 24 & $\{14,3\text{-}4\}$ & 98 & 168 & 16
\\
(48 components) & 42 & $\{8,3\text{-}7\}$ & 80 & 168 & 16
\\
& 56 & $\{6,4\text{-}7\}$ & 66 & 168 & 16
\\
\hline
31 & 24 & $\{14,4\}$ & 84 & 168 & 2
\\
(10 components) & 42 & $\{8,4\text{-}7\}$ & 66 & 168 & 8
\\
\hline
33 & 24 & $\{14,3\text{-}7\}$ & 80 & 168 & 16
\\
(22 components) & 56 & $\{6,7\}$ & 48 & 168 & 6
\\
\hline
40 & 24 & $\{14,4\text{-}7\}$ & 66 & 168 & 16
\\
(24 components) & 42 & $\{8,7\}$ & 48 & 168 & 8
\\
\hline
49 & 24 & $\{14,7\}$ & 48 & 168 & 2
\\
(2 components) &&&&&
\\
\hline
\end{tabular}
\end{table}

\clearpage

\begin{table}[htp]
\centering
\caption{$PSL(2,\mathbb{F}_7)$ - Cell Complexes of 385 Total Components}
\begin{tabular}{|c|c|c|c|c|c|}
\hline
 Genus & \# & Schl\"afli & \# & \# & \# Components of
\\
&Faces&Symbol&Vertices&Edges& This Type
\\
\hline
0 & 2 & $\{3,2\}$ & 3 & 3 & 28
\\
(245 components) & 2 & $\{4,2\}$ & 4 & 4 & 63
\\
& 3 & $\{4,2\text{-}3\}$ & 5 & 6 & 28
\\
& 4 & $\{3,3\}$ & 4 & 6 & 28
\\
 & 4 & $\{6,2\text{-}3\}$ & 10 & 12 & 28
\\
 & 6 & $\{4,3\}$ & 8 & 12 & 14
\\
 & 6 & $\{8,2\text{-}3\}$ & 20 & 24 & 14
\\
 & 8 & $\{3,4\}$ & 6 & 12 & 14
\\
 & 8 & $\{6,2\text{-}4\}$ & 18 & 24 & 14
 \\
& 12 & $\{4,3\text{-}4\}$ & 14 & 24 & 14
 \\
\hline
1 & 7 & $\{6,3\}$ & 14 & 21 & 16
\\
(16 components) &&&&&
\\
\hline
3 & 3 & $\{14,3\}$ & 14 & 21 & 16
\\
(72 components) & 6 & $\{8,3\text{-}4\}$ & 14 & 24 & 14
\\
& 7 & $\{6,3\text{-}7\}$ & 10 & 21 & 32
\\
& 24 & $\{7,3\}$ & 56 & 84 & 2
\\
& 24 & $\{14,2\text{-}3\}$ & 140 & 168 & 2
\\
& 56 & $\{3,7\}$ & 24 & 84 & 2
\\
& 56 & $\{6,2\text{-}7\}$ & 108 & 168 & 2
\\
& 84 & $\{4,3\text{-}7\}$ & 80 & 168 & 2
\\
\hline
8 & 42 & $\{8,3\}$ & 112 & 168 & 2
\\
(6 components) & 56 & $\{6,3\text{-}4\}$ & 98 & 168 & 4
\\
\hline
10 & 24 & $\{7,4\}$ & 42 & 84 & 2
\\
(10 components) & 24 & $\{14,2\text{-}4\}$ & 126 & 168 & 2
\\
& 42 & $\{4,7\}$ & 24 & 84 & 2
\\
& 42 & $\{8,2\text{-}7\}$ & 108 & 168 & 2
\\
& 84 & $\{4,4\text{-}7\}$ & 66 & 168 & 2
\\
\hline
15 & 42 & $\{8,3\text{-}4\}$ & 98 & 168 & 2
\\
(3 components) & 56 & $\{6,4\}$ & 84 & 168 & 1
\\
\hline
17 & 56 & $\{6,3\text{-}7\}$ & 80 & 168 & 2
\\
(2 components) &&&&&
\\
\hline
19 & 24 & $\{7,7\}$ & 24 & 84 & 2
\\
(4 components) & 24 & $\{14,2\text{-}7\}$ & 108 & 168 & 2
\\
\hline
22 & 42 & $\{8,4\}$ & 84 & 168 & 2
\\
(2 components) &&&&&
\\
\hline
24 & 24 & $\{14,3\text{-}4\}$ & 98 & 168 & 4
\\
(12 components) & 42 & $\{8,3\text{-}7\}$ & 80 & 168 & 4
\\
& 56 & $\{6,4\text{-}7\}$ & 66 & 168 & 4
 \\
\hline
 &&&&&continued on next page
 \\
\hline
\end{tabular}
\label{table:PSL2F7}
\end{table}

\clearpage

\begin{table}[htp]
\centering
\caption*{Table \ref{table:PSL2F7}$: PSL(2,\mathbb{F}_7)$ - continued}
\begin{tabular}{|c|c|c|c|c|c|}
\hline
 Genus & \# & Schl\"afli & \# & \# & \# Components of
\\
&Faces&Symbol&Vertices&Edges& This Type
\\
\hline
31 & 42 & $\{8,4\text{-}7\}$ & 66 & 168 & 2
\\
(2 components) &&&&&
\\
\hline
33 & 24 & $\{14,3\text{-}7\}$ & 80 & 168 & 4
\\
(5 components) & 56 & $\{6,7\}$ & 48 & 168 & 1
\\
\hline
40 & 24 & $\{14,4\text{-}7\}$ & 66 & 168 & 4
\\
(6 components) & 42 & $\{8,7\}$ & 48 & 168 & 2
\\
\hline
\end{tabular}
\end{table}

\clearpage

\begin{table}[htp]
\centering
\caption{$\Sigma_3$ - Cell Complexes of 2 Total Components}
\begin{tabular}{|c|c|c|c|c|c|}
\hline
 Genus & \# & Schl\"afli & \# & \# & \# Components of
\\
&Faces&Symbol&Vertices&Edges& This Type
\\
\hline
0 & 2 & $\{3, 2\}$ & 3 & 3 & 1
\\
0 & 3 & $\{4, 2\text{-}3\}$ & 5 & 6 & 1
\\
\hline
\end{tabular}
\label{table:Sigma3}
\end{table}

\vskip 1cm

\begin{table}[htp]
\centering
\caption{$A_4 \cong PSL(2,\mathbb{F}_3)$ - Cell Complexes of 5 Total Components}
\begin{tabular}{|c|c|c|c|c|c|}
\hline
 Genus & \# & Schl\"afli & \# & \# & \# Components of
\\
&Faces&Symbol&Vertices&Edges& This Type
\\
\hline
0 & 4 & $\{3,3\}$ & 4 & 6 & 2
\\
 & 4 & $\{6,2\text{-}3\}$ & 10 & 12 & 2
\\
 & 6 & $\{4,3\}$ & 8 & 12 & 1
\\
\hline
\end{tabular}
\label{table:Alt4}
\end{table}

\vskip 1cm

\begin{table}[htp]
\centering
\caption{$\Sigma_4$ - Cell Complexes of 27 Total Components}
\begin{tabular}{|c|c|c|c|c|c|}
\hline
 Genus & \# & Schl\"afli & \# & \# & \# Components of
\\
&Faces&Symbol&Vertices&Edges& This Type
\\
\hline
0 & 2 & $\{3,2\}$ & 3 & 3 & 4
\\
(26 components) & 2 & $\{4,2\}$ & 4 & 4 & 9
\\
 & 3 & $\{4,2\text{-}3\}$ & 5 & 6 & 4
\\
 & 4 & $\{3,3\}$ & 4 & 6 & 2
\\
 & 4 & $\{6,2\text{-}3\}$ & 10 & 12 & 2
\\
 & 6 & $\{4,3\}$ & 8 & 12 & 1
\\
 & 6 & $\{8,2\text{-}3\}$ & 20 & 24 & 1
\\
 & 8 & $\{3,4\}$ & 6 & 12 & 1
\\
 & 8 & $\{6,2\text{-}4\}$ & 18 & 24 & 1
\\
 & 12 & $\{4,3\text{-}4\}$ & 14 & 24 & 1
\\
\hline
3 & 6 & $\{8,3\text{-}4\}$ & 14 & 24 & 1
\\
\hline
\end{tabular}
\label{table:Sigma4}
\end{table}

\clearpage

\begin{table}[htp]
\centering
\caption{$A_5 \cong PSL(2,\mathbb{F}_5)$ - Cell Complexes of 91 Total Components}
\begin{tabular}{|c|c|c|c|c|c|}
\hline
 Genus & \# & Schl\"afli & \# & \# & \# Components of
\\
&Faces&Symbol&Vertices&Edges& This Type
\\
\hline
0 & 2 & $\{3,2\}$ & 3 & 3 & 10
\\
(79 components) & 2 & $\{5,2\}$ & 5 & 5 & 12
\\ & 3 & $\{4,2\text{-}3\}$ & 5 & 6 & 10
\\
 & 4 & $\{3,3\}$ & 4 & 6 & 10
\\
 & 4 & $\{6,2\text{-}3\}$ & 10 & 12 & 10
\\
 & 5 & $\{4,2\text{-}5\}$ & 7 & 10 & 12
\\
 & 6 & $\{4,3\}$ & 8 & 12 & 5
\\
 & 12 & $\{5,3\}$ & 20 & 30 & 2
\\
 & 12 & $\{10,2\text{-}3\}$ & 50 & 60 & 2
\\
 & 20 & $\{3,5\}$ & 12 & 30 & 2
\\
 & 20 & $\{6,2\text{-}5\}$ & 42 & 60 & 2
\\
 & 30 & $\{4,3\text{-}5\}$ & 32 & 60 & 2
\\
\hline
4 & 12 & $\{5,5\}$ & 12 & 30 & 2
\\
(5 components) & 12 & $\{10,2\text{-}5\}$ & 42 & 60 & 2
\\
 & 30 & $\{4,5\}$ & 24 & 60 & 1
\\
\hline
5 & 20 & $\{6,3\text{-}5\}$ & 32 & 60 & 2
\\
(2 components) &&&&&
\\
\hline
9 & 12 & $\{10,3\text{-}5\}$ & 32 & 60 & 4
\\
(5 components) & 20 & $\{6,5\}$ & 24 & 60 & 1
 \\
\hline
\end{tabular}
\label{table:Alt5}
\end{table}

\clearpage

\begin{table}[htp]
\centering
\caption{$\Sigma_5$ - Cell Complexes of 284 Total Components}
\begin{tabular}{|c|c|c|c|c|c|}
\hline
 Genus & \# & Schl\"afli & \# & \# & \# Components of
\\
&Faces&Symbol&Vertices&Edges& This Type
\\
\hline
0 & 2 & $\{3,2\}$ & 3 & 3 & 20
\\
(194 components) & 2 & $\{4,2\}$ & 4 & 4 & 45
\\
 & 2 & $\{5,2\}$ & 5 & 5 & 12
\\
 & 2 & $\{6,2\}$ & 6 & 6 & 10
\\
 & 3 & $\{4,2\text{-}3\}$ & 5 & 6 & 40
\\
 & 4 & $\{3,3\}$ & 4 & 6 & 10
\\
 & 4 & $\{6,2\text{-}3\}$ & 10 & 12 & 10
\\
 & 5 & $\{4,2\text{-}5\}$ & 7 & 10 & 12
\\
 & 6 & $\{4,3\}$ & 8 & 12 & 5
\\
 & 6 & $\{8,2\text{-}3\}$ & 20 & 24 & 5
\\
 & 8 & $\{3,4\}$ & 6 & 12 & 5
\\
 & 8 & $\{6,2\text{-}4\}$ & 18 & 24 & 5
\\
 & 12 & $\{4,3\text{-}4\}$ & 14 & 24 & 5
\\
 & 12 & $\{5,3\}$ & 20 & 30 & 2
\\
 & 12 & $\{10,2\text{-}3\}$ & 50 & 60 & 2
\\
 & 20 & $\{3,5\}$ & 12 & 30 & 2
\\
 & 20 & $\{6,2\text{-}5\}$ & 42 & 60 & 2
\\
 & 30 & $\{4,3\text{-}5\}$ & 32 & 60 & 2
\\
\hline
1 & 5 & $\{4,4\}$ & 5 & 10 & 12
\\
(24 components) & 5 & $\{8,2\text{-}4\}$ & 15 & 20 & 12
\\
\hline
3 & 6 & $\{8,3\text{-}4\}$ & 14 & 24 & 5
\\
(5 components) &&&&&
\\
\hline
4 & 4 & $\{10,4\}$ & 10 & 20 & 6
\\
(27 components) & 5 & $\{8,4\text{-}5\}$ & 9 & 20 & 12
\\
 & 12 & $\{5,5\}$ & 12 & 30 & 2
\\
 & 12 & $\{10,2\text{-}5\}$ & 42 & 60 & 2
\\
 & 24 & $\{5,4\}$ & 30 & 60 & 1
\\
 & 24 & $\{10,2\text{-}4\}$ & 90 & 120 & 1
\\
 & 30 & $\{4,5\}$ & 24 & 60 & 1
\\
 & 30 & $\{8,2\text{-}5\}$ & 84 & 120 & 1
\\
 & 60 & $\{4,4\text{-}5\}$ & 54 & 120 & 1
\\
\hline
5 & 20 & $\{6,3\text{-}5\}$ & 32 & 60 & 2
\\
(2 components) &&&&&
\\
\hline
6 & 20 & $\{6,4\}$ & 30 & 60 & 1
\\
(5 components) & 20 & $\{12,2\text{-}4\}$ & 90 & 120 & 1
\\
 & 30 & $\{4,6\}$ & 20 & 60 & 1
\\
 & 30 & $\{8,2\text{-}6\}$ & 80 & 120 & 1
\\
 & 60 & $\{4,4\text{-}6\}$ & 50 & 120 & 1
 \\
\hline
 &&&&&continued on next page
 \\
\hline
\end{tabular}
\label{table:Sigma5}
\end{table}

\clearpage

\begin{table}[htp]
\centering
\caption*{Table \ref{table:Sigma5}$: \Sigma_5$ - continued}
\begin{tabular}{|c|c|c|c|c|c|}
\hline
 Genus & \# & Schl\"afli & \# & \# & \# Components of
\\
&Faces&Symbol&Vertices&Edges& This Type
\\
\hline
9 & 12 & $\{10,3\text{-}5\}$ & 32 & 60 & 4
\\
(9 components) & 20 & $\{6,5\}$ & 24 & 60 & 1
\\
 & 20 & $\{12,2\text{-}5\}$ & 84 & 120 & 1
\\
 & 24 & $\{5,6\}$ & 20 & 60 & 1
\\
 & 24 & $\{10,2\text{-}6\}$ & 80 & 120 & 1
\\
 & 60 & $\{4,5\text{-}6\}$ & 44 & 120 & 1
 \\
\hline
11 & 20 & $\{6,6\}$ & 20 & 60 & 1
\\
(3 components) & 20 & $\{12,2\text{-}6\}$ & 80 & 120 & 1
\\
 & 30 & $\{8,3\text{-}4\}$ & 70 & 120 & 1
 \\
\hline
16 & 20 & $\{12,3\text{-}4\}$ & 70 & 120 & 2
\\
(6 components) & 30 & $\{8,3\text{-}6\}$ & 60 & 120 & 2
\\
 & 40 & $\{6,4\text{-}6\}$ & 50 & 120 & 2
 \\
\hline
19 & 30 & $\{8,4\text{-}5\}$ & 54 & 120 & 1
 \\
\hline
21 & 20 & $\{12,3\text{-}6\}$ & 60 & 120 & 1
 \\
\hline
24 & 20 & $\{12,4\text{-}5\}$ & 54 & 120 & 2
\\
(6 components) & 24 & $\{10,4\text{-}6\}$ & 50 & 120 & 2
\\
 & 30 & $\{8,5\text{-}6\}$ & 44 & 120 & 2
\\
\hline
29 & 20 & $\{12,5\text{-}6\}$ & 44 & 120 & 1
 \\
\hline
\end{tabular}
\end{table}

\clearpage

\begin{table}[htp]
\centering
\caption{$A_6$ - Cell Complexes of 1335 Total Components}
\begin{tabular}{|c|c|c|c|c|c|}
\hline
 Genus & \# & Schl\"afli & \# & \# & \# Components of
\\
&Faces&Symbol&Vertices&Edges& This Type
\\
\hline
0 & 2 & $\{3,2\}$ & 3 & 3 & 120
\\
(909 components) & 2 & $\{4,2\}$ & 4 & 4 & 135
\\
 & 2 & $\{5,2\}$ & 5 & 5 & 72
\\
 & 3 & $\{4,2\text{-}3\}$ & 5 & 6 & 120
\\
 & 4 & $\{3,3\}$ & 4 & 6 & 60
\\
 & 4 & $\{6,2\text{-}3\}$ & 10 & 12 & 60
\\
 & 5 & $\{4,2\text{-}5\}$ & 7 & 10 & 72
\\
 & 6 & $\{4,3\}$ & 8 & 12 & 30
\\
 & 6 & $\{8,2\text{-}3\}$ & 20 & 24 & 30
\\
 & 8 & $\{3,4\}$ & 6 & 12 & 30
\\
 & 8 & $\{6,2\text{-}4\}$ & 18 & 24 & 30
\\
 & 12 & $\{4,3\text{-}4\}$ & 14 & 24 & 30
\\
 & 12 & $\{5,3\}$ & 20 & 30 & 24
\\
 & 12 & $\{10,2\text{-}3\}$ & 50 & 60 & 24
\\
 & 20 & $\{3,5\}$ & 12 & 30 & 24
\\
 & 20 & $\{6,2\text{-}5\}$ & 42 & 60 & 24
\\
 & 30 & $\{4,3\text{-}5\}$ & 32 & 60 & 24
\\
\hline
1 & 9 & $\{4,4\}$ & 9 & 18 & 40
\\
(80 components) & 9 & $\{8,2\text{-}4\}$ & 27 & 36 & 40
\\
\hline
3 & 6 & $\{8,3\text{-}4\}$ & 14 & 24 & 30
\\
(30 components) &&&&&
\\
\hline
4 & 9 & $\{8,3\text{-}4\}$ & 21 & 36 & 40
\\
(120 components) & 12 & $\{5,5\}$ & 12 & 30 & 24
\\
 & 12 & $\{6,4\}$ & 18 & 36 & 20
\\
 & 12 & $\{10,2\text{-}5\}$ & 42 & 60 & 24
\\
 & 30 & $\{4,5\}$ & 24 & 60 & 12
\\
\hline
5 & 20 & $\{6,3\text{-}5\}$ & 32 & 60 & 24
\\
(24 components) &&&&&
\\
\hline
9 & 12 & $\{10,3\text{-}5\}$ & 32 & 60 & 48
\\
(60 components) & 20 & $\{6,5\}$ & 24 & 60 & 12
\\
\hline
10 & 72 & $\{5,4\}$ & 90 & 180 & 4
\\
(20 components) & 72 & $\{10,2\text{-}4\}$ & 270 & 360 & 4
\\
 & 90 & $\{4,5\}$ & 72 & 180 & 4
\\
 & 90 & $\{8,2\text{-}5\}$ & 252 & 360 & 4
\\
 & 180 & $\{4,4\text{-}5\}$ & 162 & 360 & 4
\\
\hline
16 & 90 & $\{8,3\}$ & 240 & 360 & 2
\\
(6 components) & 120 & $\{6,3\text{-}4\}$ & 210 & 360 & 4
 \\
\hline
19 & 72 & $\{5,5\}$ & 72 & 180 & 4
 \\
(8 components) & 72 & $\{10,2\text{-}5\}$ & 252 & 360 & 4
 \\
\hline
 &&&&&continued on next page
 \\
\hline
\end{tabular}
\label{table:Alt6}
\end{table}

\clearpage

\begin{table}[htp]
\centering
\caption*{Table \ref{table:Alt6}$: A_6$ - continued}
\begin{tabular}{|c|c|c|c|c|c|}
\hline
 Genus & \# & Schl\"afli & \# & \# & \# Components of
\\
&Faces&Symbol&Vertices&Edges& This Type
\\
\hline
25 & 72 & $\{10,3\}$ & 240 & 360 & 2
\\
(6 components) & 120 & $\{6,3\text{-}5\}$ & 192 & 360 & 4
\\
\hline
40 & 72 & $\{10,3\text{-}4\}$ & 210 & 360 & 8
\\
(24 components) & 90 & $\{8,3\text{-}5\}$ & 192 & 360 & 8
\\
 & 120 & $\{6,4\text{-}5\}$ & 162 & 360 & 8
\\
\hline
46 & 90 & $\{8,4\}$ & 180 & 360 & 2
 \\
(2 components) &&&&&
\\
\hline
49 & 72 & $\{10,3\text{-}5\}$ & 192 & 360 & 4
\\
(6 components) & 120 & $\{6,5\}$ & 144 & 360 & 2
\\
\hline
55 & 72 & $\{10,4\}$ & 180 & 360 & 2
\\
(10 components) & 90 & $\{8,4\text{-}5\}$ & 162 & 360 & 8
\\
\hline
64 & 72 & $\{10,4\text{-}5\}$ & 162 & 360 & 16
\\
(24 components) & 90 & $\{8,5\}$ & 144 & 360 & 8
\\
\hline
73 & 72 & $\{10,5\}$ & 144 & 360 & 6
 \\
(6 components) &&&&&
\\
\hline
\end{tabular}
\end{table}

\clearpage

\begin{table}[htp]
\centering
\caption{$\Sigma_6$ - Cell Complexes of 4477 Total Components}
\begin{tabular}{|c|c|c|c|c|c|}
\hline
 Genus & \# & Schl\"afli & \# & \# & \# Components of
\\
&Faces&Symbol&Vertices&Edges& This Type
\\
\hline
0 & 2 & $\{3,2\}$ & 3 & 3 & 240
\\
(2904 components) & 2 & $\{4,2\}$ & 4 & 4 & 540
\\
 & 2 & $\{5,2\}$ & 5 & 5 & 72
\\
 & 2 & $\{6,2\}$ & 6 & 6 & 240
\\
 & 3 & $\{4,2\text{-}3\}$ & 5 & 6 & 720
\\
 & 4 & $\{3,3\}$ & 4 & 6 & 120
\\
 & 4 & $\{6,2\text{-}3\}$ & 10 & 12 & 240
\\
 & 5 & $\{4,2\text{-}5\}$ & 7 & 10 & 72
\\
 & 6 & $\{4,3\}$ & 8 & 12 & 120
\\
 & 6 & $\{8,2\text{-}3\}$ & 20 & 24 & 120
\\
 & 8 & $\{3,4\}$ & 6 & 12 & 60
\\
 & 8 & $\{6,2\text{-}4\}$ & 18 & 24 & 120
\\
 & 12 & $\{4,3\text{-}4\}$ & 14 & 24 & 120
\\
 & 12 & $\{5,3\}$ & 20 & 30 & 24
\\
 & 12 & $\{10,2\text{-}3\}$ & 50 & 60 & 24
\\
 & 20 & $\{3,5\}$ & 12 & 30 & 24
\\
 & 20 & $\{6,2\text{-}5\}$ & 42 & 60 & 24
\\
 & 30 & $\{4,3\text{-}5\}$ & 32 & 60 & 24
\\
\hline
1 & 4 & $\{6,3\}$ & 8 & 12 & 60
\\
(284 components) & 5 & $\{4,4\}$ & 5 & 10 & 72
\\
 & 5 & $\{8,2\text{-}4\}$ & 15 & 20 & 72
\\
 & 9 & $\{4,4\}$ & 9 & 18 & 40
\\
 & 9 & $\{8,2\text{-}4\}$ & 27 & 36 & 40
\\
\hline
3 & 6 & $\{8,3\text{-}4\}$ & 14 & 24 & 120
\\
(150 components) & 8 & $\{6,4\}$ & 12 & 24 & 30
\\
\hline
4 & 4 & $\{10,4\}$ & 10 & 20 & 36
\\
(396 components) & 5 & $\{8,4\text{-}5\}$ & 9 & 20 & 72
\\
 & 6 & $\{12,2\text{-}6\}$ & 24 & 36 & 40
\\
 & 9 & $\{8,3\text{-}4\}$ & 21 & 36 & 40
\\
 & 12 & $\{5,5\}$ & 12 & 30 & 24
\\
 & 12 & $\{6,4\}$ & 18 & 36 & 20
\\
 & 12 & $\{10,2\text{-}5\}$ & 42 & 60 & 24
\\
 & 12 & $\{12,2\text{-}4\}$ & 54 & 72 & 20
\\
 & 18 & $\{4,6\}$ & 12 & 36 & 20
\\
 & 18 & $\{8,2\text{-}6\}$ & 48 & 72 & 20
\\
 & 24 & $\{5,4\}$ & 30 & 60 & 12
\\
 & 24 & $\{10,2\text{-}4\}$ & 90 & 120 & 12
\\
 & 30 & $\{4,5\}$ & 24 & 60 & 12
\\
 & 30 & $\{8,2\text{-}5\}$ & 84 & 120 & 12
\\
 & 36 & $\{4,4\text{-}6\}$ & 30 & 72 & 20
\\
 & 60 & $\{4,4\text{-}5\}$ & 54 & 120 & 12
\\
\hline
 &&&&&continued on next page
 \\
\hline
\end{tabular}
\label{table:Sigma6}
\end{table}

\begin{table}[htp]
\centering
\caption*{Table \ref{table:Sigma6}$: \Sigma_6$ - continued}
\begin{tabular}{|c|c|c|c|c|c|}
\hline
 Genus & \# & Schl\"afli & \# & \# & \# Components of
\\
&Faces&Symbol&Vertices&Edges& This Type
\\
\hline
5 & 20 & $\{6,3\text{-}5\}$ & 32 & 60 & 24
\\
(24 components) &&&&&
\\
\hline
6 & 20 & $\{6,4\}$ & 30 & 60 & 12
\\
(60 components) & 20 & $\{12,2\text{-}4\}$ & 90 & 120 & 12
\\
 & 30 & $\{4,6\}$ & 20 & 60 & 12
\\
 & 30 & $\{8,2\text{-}6\}$ & 80 & 120 & 12
\\
 & 60 & $\{4,4\text{-}6\}$ & 50 & 120 & 12
 \\
\hline
9 & 12 & $\{10,3\text{-}5\}$ & 32 & 60 & 48
\\
(108 components) & 20 & $\{6,5\}$ & 24 & 60 & 12
\\
 & 20 & $\{12,2\text{-}5\}$ & 84 & 120 & 12
\\
 & 24 & $\{5,6\}$ & 20 & 60 & 12
\\
 & 24 & $\{10,2\text{-}6\}$ & 80 & 120 & 12
\\
 & 60 & $\{4,5\text{-}6\}$ & 44 & 120 & 12
 \\
\hline
10 & 72 & $\{5,4\}$ & 90 & 180 & 4
\\
(20 components) & 72 & $\{10,2\text{-}4\}$ & 270 & 360 & 4
\\
 & 90 & $\{4,5\}$ & 72 & 180 & 4
\\
 & 90 & $\{8,2\text{-}5\}$ & 252 & 360 & 4
\\
 & 180 & $\{4,4\text{-}5\}$ & 162 & 360 & 4
\\
\hline
11 & 20 & $\{6,6\}$ & 20 & 60 & 12
\\
(36 components) & 20 & $\{12,2\text{-}6\}$ & 80 & 120 & 12
\\
 & 30 & $\{8,3\text{-}4\}$ & 70 & 120 & 12
 \\
\hline
16 & 12 & $\{12,4\text{-}6\}$ & 30 & 72 & 40
\\
(138 components) & 18 & $\{8,6\}$ & 24 & 72 & 20
\\
 & 20 & $\{12,3\text{-}4\}$ & 70 & 120 & 24
\\
 & 30 & $\{8,3\text{-}6\}$ & 60 & 120 & 24
\\
 & 40 & $\{6,4\text{-}6\}$ & 50 & 120 & 24
\\
 & 90 & $\{8,3\}$ & 240 & 360 & 2
 \\
 & 120 & $\{6,3\text{-}4\}$ & 210 & 360 & 4
 \\
\hline
19 & 30 & $\{8,4\text{-}5\}$ & 54 & 120 & 12
 \\
(20 components) & 72 & $\{5,5\}$ & 72 & 180 & 4
 \\
 & 72 & $\{10,2\text{-}5\}$ & 252 & 360 & 4
 \\
\hline
21 & 20 & $\{12,3\text{-}6\}$ & 60 & 120 & 12
 \\
(12 components) &&&&&
 \\
\hline
24 & 20 & $\{12,4\text{-}5\}$ & 54 & 120 & 24
\\
(72 components) & 24 & $\{10,4\text{-}6\}$ & 50 & 120 & 24
\\
 & 30 & $\{8,5\text{-}6\}$ & 44 & 120 & 24
\\
\hline
25 & 72 & $\{10,3\}$ & 240 & 360 & 2
\\
(6 components) & 120 & $\{6,3\text{-}5\}$ & 192 & 360 & 4
\\
\hline
29 & 20 & $\{12,5\text{-}6\}$ & 44 & 120 & 12
 \\
(12 components) &&&&&
\\
\hline
 &&&&&continued on next page
 \\
\hline
\end{tabular}
\end{table}

\clearpage

\begin{table}[htp]
\centering
\caption*{Table \ref{table:Sigma6}$: \Sigma_6$ - continued}
\begin{tabular}{|c|c|c|c|c|c|}
\hline
 Genus & \# & Schl\"afli & \# & \# & \# Components of
\\
&Faces&Symbol&Vertices&Edges& This Type
\\
\hline
40 & 72 & $\{10,3\text{-}4\}$ & 210 & 360 & 8
\\
(24 components) & 90 & $\{8,3\text{-}5\}$ & 192 & 360 & 8
\\
 & 120 & $\{6,4\text{-}5\}$ & 162 & 360 & 8
\\
\hline
46 & 90 & $\{8,4\}$ & 180 & 360 & 2
 \\
(2 components) &&&&&
\\
\hline
49 & 72 & $\{10,3\text{-}5\}$ & 192 & 360 & 4
\\
(14 components) & 120 & $\{6,5\}$ & 144 & 360 & 2
\\
 & 120 & $\{12,2\text{-}5\}$ & 504 & 720 & 2
\\
 & 144 & $\{5,6\}$ & 120 & 360 & 2
\\
 & 144 & $\{10,2\text{-}6\}$ & 480 & 720 & 2
\\
 & 360 & $\{4,5\text{-}6\}$ & 264 & 720 & 2
\\
\hline
55 & 72 & $\{10,4\}$ & 180 & 360 & 2
\\
(10 components) & 90 & $\{8,4\text{-}5\}$ & 162 & 360 & 8
\\
\hline
61 & 120 & $\{6,6\}$ & 120 & 360 & 6
\\
(15 components) & 120 & $\{12,2\text{-}6\}$ & 480 & 720 & 6
\\
& 360 & $\{4,6\}$ & 240 & 720 & 3
\\
\hline
64 & 72 & $\{10,4\text{-}5\}$ & 162 & 360 & 16
\\
(24 components) & 90 & $\{8,5\}$ & 144 & 360 & 8
\\
\hline
73 & 72 & $\{10,5\}$ & 144 & 360 & 6
 \\
(6 components) &&&&&
\\
\hline
91 & 120 & $\{12,3\text{-}4\}$ & 420 & 720 & 2
\\
(9 components) & 180 & $\{8,4\}$ & 360 & 720 & 3
\\
& 180 & $\{8,3\text{-}6\}$ & 360 & 720 & 2
\\
& 240 & $\{6,4\text{-}6\}$ & 300 & 720 & 2
\\
\hline
121 & 120 & $\{12,4\}$ & 360 & 720 & 8
\\
(42 components) & 120 & $\{12,3\text{-}6\}$ & 360 & 720 & 14
\\
& 180 & $\{8,4\text{-}6\}$ & 300 & 720 & 16
\\
& 240 & $\{6,6\}$ & 240 & 720 & 4
\\
\hline
139 & 120 & $\{12,4\text{-}5\}$ & 324 & 720 & 8
\\
(24 components) & 144 & $\{10,4\text{-}6\}$ & 300 & 720 & 8
\\
& 180 & $\{8,5\text{-}6\}$ & 264 & 720 & 8
\\
\hline
151 & 120 & $\{12,4\text{-}6\}$ & 300 & 720 & 14
\\
(21 components) & 180 & $\{8,6\}$ & 240 & 720 & 7
\\
\hline
169 & 120 & $\{12,5\text{-}6\}$ & 264 & 720 & 30
\\
(44 components) & 144 & $\{10,6\}$ & 240 & 720 & 14
\\
\hline
\end{tabular}
\end{table}

\begin{table}[htp]
\centering
\caption{$A_7$ - Cell Complexes of 16813 Total Components}
\begin{tabular}{|c|c|c|c|c|c|}
\hline
 Genus & \# & Schl\"afli & \# & \# & \# Components of
\\
&Faces&Symbol&Vertices&Edges& This Type
\\
\hline
0 & 2 & $\{3,2\}$ & 3 & 3 & 840
\\
(8379 components) & 2 & $\{4,2\}$ & 4 & 4 & 945
\\
 & 2 & $\{5,2\}$ & 5 & 5 & 252
\\
 & 2 & $\{6,2\}$ & 6 & 6 & 210
\\
 & 2 & $\{12,2\}$ & 12 & 12 & 210
\\
 & 3 & $\{4,2\text{-}3\}$ & 5 & 6 & 1260
\\
 & 4 & $\{3,3\}$ & 4 & 6 & 420
\\
 & 4 & $\{6,2\text{-}3\}$ & 10 & 12 & 840
\\
 & 5 & $\{4,2\text{-}5\}$ & 7 & 10 & 252
\\
 & 6 & $\{4,2\text{-}6\}$ & 8 & 12 & 420
\\
 & 6 & $\{4,3\}$ & 8 & 12 & 420
\\
 & 6 & $\{8,2\text{-}3\}$ & 20 & 24 & 420
\\
 & 8 & $\{3,4\}$ & 6 & 12 & 420
\\
 & 8 & $\{6,2\text{-}4\}$ & 18 & 24 & 420
\\
 & 12 & $\{4,3\text{-}4\}$ & 14 & 24 & 420
\\
 & 12 & $\{5,3\}$ & 20 & 30 & 126
\\
 & 12 & $\{10,2\text{-}3\}$ & 50 & 60 & 126
\\
 & 20 & $\{3,5\}$ & 12 & 30 & 126
\\
 & 20 & $\{6,2\text{-}5\}$ & 42 & 60 & 126
\\
 & 30 & $\{4,3\text{-}5\}$ & 32 & 60 & 126
\\
\hline
1 & 4 & $\{6,3\}$ & 8 & 12 & 210
\\
(1514 components) & 5 & $\{4,4\}$ & 5 & 10 & 252
\\
 & 5 & $\{8,2\text{-}4\}$ & 15 & 20 & 252
\\
 & 7 & $\{6,3\}$ & 14 & 21 & 240
\\
 & 9 & $\{4,4\}$ & 9 & 18 & 280
\\
 & 9 & $\{8,2\text{-}4\}$ & 27 & 36 & 280
\\
\hline
3 & 3 & $\{14,3\}$ & 14 & 21 & 240
\\
(1440 components) & 6 & $\{8,3\text{-}4\}$ & 14 & 24 & 420
\\
& 7 & $\{6,3\text{-}7\}$ & 10 & 21 & 480
\\
& 24 & $\{7,3\}$ & 56 & 84 & 60
\\
& 24 & $\{14,2\text{-}3\}$ & 140 & 168 & 60
\\
& 56 & $\{3,7\}$ & 24 & 84 & 60
\\
& 56 & $\{6,2\text{-}7\}$ & 108 & 168 & 60
\\
& 84 & $\{4,3\text{-}7\}$ & 80 & 168 & 60
\\
\hline
 &&&&&continued on next page
 \\
\hline
\end{tabular}
\label{table:Alt7}
\end{table}

\clearpage

\begin{table}[htp]
\centering
\caption*{Table \ref{table:Alt7}$: A_7$ - continued}
\begin{tabular}{|c|c|c|c|c|c|}
\hline
 Genus & \# & Schl\"afli & \# & \# & \# Components of
\\
&Faces&Symbol&Vertices&Edges& This Type
\\
\hline
4 & 4 & $\{10,4\}$ & 10 & 20 & 126
\\
(1197 components) & 5 & $\{8,4\text{-}5\}$ & 9 & 20 & 252
\\
 & 9 & $\{8,3\text{-}4\}$ & 21 & 36 & 280
\\
 & 12 & $\{5,5\}$ & 12 & 30 & 126
\\
 & 12 & $\{6,4\}$ & 18 & 36 & 140
\\
 & 12 & $\{10,2\text{-}5\}$ & 42 & 60 & 126
\\
 & 24 & $\{5,4\}$ & 30 & 60 & 21
\\
 & 24 & $\{10,2\text{-}4\}$ & 90 & 120 & 21
\\
 & 30 & $\{4,5\}$ & 24 & 60 & 63
\\
 & 30 & $\{8,2\text{-}5\}$ & 84 & 120 & 21
\\
 & 60 & $\{4,4\text{-}5\}$ & 54 & 120 & 21
\\
\hline
5 & 20 & $\{6,3\text{-}5\}$ & 32 & 60 & 126
\\
(126 components) &&&&&
\\
\hline
6 & 20 & $\{6,4\}$ & 30 & 60 & 21
\\
(105 components) & 20 & $\{12,2\text{-}4\}$ & 90 & 120 & 21
\\
 & 30 & $\{4,6\}$ & 20 & 60 & 21
\\
 & 30 & $\{8,2\text{-}6\}$ & 80 & 120 & 21
\\
 & 60 & $\{4,4\text{-}6\}$ & 50 & 120 & 21
 \\
\hline
8 & 42 & $\{8,3\}$ & 112 & 168 & 60
\\
(180 components) & 56 & $\{6,3\text{-}4\}$ & 98 & 168 & 120
 \\
\hline
9 & 12 & $\{10,3\text{-}5\}$ & 32 & 60 & 252
\\
(399 components) & 20 & $\{6,5\}$ & 24 & 60 & 63
\\
 & 20 & $\{12,2\text{-}5\}$ & 84 & 120 & 21
\\
 & 24 & $\{5,6\}$ & 20 & 60 & 21
\\
 & 24 & $\{10,2\text{-}6\}$ & 80 & 120 & 21
\\
 & 60 & $\{4,5\text{-}6\}$ & 44 & 120 & 21
 \\
\hline
10  & 24 & $\{7,4\}$ & 42 & 84 & 60
\\
(440 components) & 24 & $\{14,2\text{-}4\}$ & 126 & 168 & 60
\\
 & 42 & $\{4,7\}$ & 24 & 84 & 60
\\
 & 42 & $\{8,2\text{-}7\}$ & 108 & 168 & 60
\\
 & 72 & $\{5,4\}$ & 90 & 180 & 28
\\
 & 72 & $\{10,2\text{-}4\}$ & 270 & 360 & 28
\\
 & 84 & $\{4,4\text{-}7\}$ & 66 & 168 & 60
\\
 & 90 & $\{4,5\}$ & 72 & 180 & 28
\\
 & 90 & $\{8,2\text{-}5\}$ & 252 & 360 & 28
\\
 & 180 & $\{4,4\text{-}5\}$ & 162 & 360 & 28
\\
\hline
 &&&&&continued on next page
 \\
\hline
\end{tabular}
\end{table}

\clearpage

\begin{table}[htp]
\centering
\caption*{Table \ref{table:Alt7}$: A_7$ - continued}
\begin{tabular}{|c|c|c|c|c|c|}
\hline
 Genus & \# & Schl\"afli & \# & \# & \# Components of
\\
&Faces&Symbol&Vertices&Edges& This Type
\\
\hline
11 & 20 & $\{6,6\}$ & 20 & 60 & 21
\\
(63 components) & 20 & $\{12,2\text{-}6\}$ & 80 & 120 & 21
\\
 & 30 & $\{8,3\text{-}4\}$ & 70 & 120 & 21
 \\
\hline
15 & 42 & $\{8,3\text{-}4\}$ & 98 & 168 & 60
\\
(90 components) & 56 & $\{6,4\}$ & 84 & 168 & 30
\\
\hline
16 & 20 & $\{12,3\text{-}4\}$ & 70 & 120 & 42
\\
(168 components) & 30 & $\{8,3\text{-}6\}$ & 60 & 120 & 42
\\
 & 40 & $\{6,4\text{-}6\}$ & 50 & 120 & 42
\\
 & 90 & $\{8,3\}$ & 240 & 360 & 14
 \\
 & 120 & $\{6,3\text{-}4\}$ & 210 & 360 & 28
 \\
\hline
17 & 56 & $\{6,3\text{-}7\}$ & 80 & 168 & 60
\\
(60 components) &&&&&
\\
\hline
19 & 24 & $\{7,7\}$ & 24 & 84 & 60
 \\
(197 components) & 24 & $\{14,2\text{-}7\}$ & 108 & 168 & 60
 \\
& 30 & $\{8,4\text{-}5\}$ & 54 & 120 & 21
 \\
& 72 & $\{5,5\}$ & 72 & 180 & 28
 \\
 & 72 & $\{10,2\text{-}5\}$ & 252 & 360 & 28
 \\
\hline
21 & 20 & $\{12,3\text{-}6\}$ & 60 & 120 & 21
 \\
(21 components) &&&&&
 \\
\hline
22 & 42 & $\{8,4\}$ & 84 & 168 & 60
 \\
(60 components) &&&&&
 \\
\hline
24 & 20 & $\{12,4\text{-}5\}$ & 54 & 120 & 42
\\
(486 components) & 24 & $\{10,4\text{-}6\}$ & 50 & 120 & 42
\\
& 24 & $\{14,3\text{-}4\}$ & 98 & 168 & 120
\\
 & 30 & $\{8,5\text{-}6\}$ & 44 & 120 & 42
\\
& 42 & $\{8,3\text{-}7\}$ & 80 & 168 & 120
\\
& 56 & $\{6,4\text{-}7\}$ & 66 & 168 & 120
\\
\hline
25 & 72 & $\{10,3\}$ & 240 & 360 & 14
\\
(42 components) & 120 & $\{6,3\text{-}5\}$ & 192 & 360 & 28
\\
\hline
29 & 20 & $\{12,5\text{-}6\}$ & 44 & 120 & 21
 \\
(21 components) &&&&&
\\
\hline
31 & 42 & $\{8,4\text{-}7\}$ & 66 & 168 & 60
 \\
(60 components) &&&&&
\\
\hline
33 & 24 & $\{14,3\text{-}7\}$ & 80 & 168 & 120
 \\
(150 components) & 56 & $\{6,7\}$ & 48 & 168 & 30
\\
\hline
40 & 24 & $\{14,4\text{-}7\}$ & 66 & 168 & 120
\\
(348 components) & 42 & $\{8,7\}$ & 48 & 168 & 60
\\
& 72 & $\{10,3\text{-}4\}$ & 210 & 360 & 56
\\
& 90 & $\{8,3\text{-}5\}$ & 192 & 360 & 56
\\
 & 120 & $\{6,4\text{-}5\}$ & 162 & 360 & 56
\\
\hline
 &&&&&continued on next page
\\
\hline
\end{tabular}
\end{table}

\clearpage

\begin{table}[htp]
\caption*{Table \ref{table:Alt7}$: A_7$ - continued}
\begin{tabular}{|c|c|c|c|c|c|}
\hline
 Genus & \# & Schl\"afli & \# & \# & \# Components of
\\
&Faces&Symbol&Vertices&Edges& This Type
\\
\hline
46 & 90 & $\{8,4\}$ & 180 & 360 & 14
 \\
(14 components) &&&&&
\\
\hline
49 & 72 & $\{10,3\text{-}5\}$ & 192 & 360 & 28
\\
(42 components) & 120 & $\{6,5\}$ & 144 & 360 & 14
\\
\hline
55 & 72 & $\{10,4\}$ & 180 & 360 & 14
\\
(70 components) & 90 & $\{8,4\text{-}5\}$ & 162 & 360 & 56
\\
\hline
64 & 72 & $\{10,4\text{-}5\}$ & 162 & 360 & 112
\\
(168 components) & 90 & $\{8,5\}$ & 144 & 360 & 56
\\
\hline
73 & 72 & $\{10,5\}$ & 144 & 360 & 42
 \\
(42 components) &&&&&
\\
\hline
136 & 360 & $\{7,4\}$ & 630 & 1260 & 4
\\
(20 components) & 360 & $\{14,2\text{-}4\}$ & 1890 & 2520 & 4
\\
& 630 & $\{4,7\}$ & 360 & 1260 & 4
\\
& 630 & $\{8,2\text{-}7\}$ & 1620 & 2520 & 4
\\
& 1260 & $\{4,4\text{-}7\}$ & 990 & 2520 & 4
\\
\hline
169 & 504 & $\{10,3\}$ & 1680 & 2520 & 1
\\
(3 components) & 840 & $\{6,3\text{-}5\}$ & 1344 & 2520 & 2
\\
\hline
199 & 360 & $\{7,5\}$ & 504 & 1260 & 4
\\
(20 components) & 360 & $\{14,2\text{-}5\}$ & 1764 & 2520 & 4
\\
& 504 & $\{5,7\}$ & 360 & 1260 & 4
\\
& 504 & $\{10,2\text{-}7\}$ & 1620 & 2520 & 4
\\
& 1260 & $\{4,5\text{-}7\}$ & 864 & 2520 & 4
\\
\hline
211 & 420 & $\{12,3\}$ & 1680 & 2520 & 1
\\
(15 components) & 630 & $\{8,3\text{-}4\}$ & 1470 & 2520 & 8
\\
& 840 & $\{6,4\}$ & 1260 & 2520 & 4
\\
& 840 & $\{6,6\text{-}3\}$ & 1260 & 2520 & 2
\\
\hline
241 & 360 & $\{7,6\}$ & 420 & 1260 & 4
\\
(26 components) & 360 & $\{14,3\}$ & 1680 & 2520 & 2
\\
& 360 & $\{14,2\text{-}6\}$ & 1680 & 2520 & 4
\\
& 420 & $\{6,7\}$ & 360 & 1260 & 4
\\
& 420 & $\{12,2\text{-}7\}$ & 1620 & 2520 & 4
\\
& 840 & $\{6,3\text{-}7\}$ & 1200 & 2520 & 4
\\
& 1260 & $\{4,6\text{-}7\}$ & 780 & 2520 & 4
\\
\hline
271 & 360 & $\{7,7\}$ & 360 & 1260 & 6
\\
(13 components) & 360 & $\{14,2\text{-}7\}$ & 1620 & 2520 & 6
\\
& 1260 & $\{4,7\}$ & 720 & 2520 & 1
\\
\hline
274 & 504 & $\{10,3\text{-}4\}$ & 1470 & 2520 & 10
\\
(30 components) & 630 & $\{8,3\text{-}5\}$ & 1344 & 2520 & 10
\\
& 840 & $\{6,4\text{-}5\}$ & 1134 & 2520 & 10
\\
\hline
 &&&&&continued on next page
 \\
\hline
\end{tabular}
\end{table}

\clearpage

\begin{table}[htp]
\centering
\caption*{Table \ref{table:Alt7}$: A_7$ - continued}
\begin{tabular}{|c|c|c|c|c|c|}
\hline
 Genus & \# & Schl\"afli & \# & \# & \# Components of
\\
&Faces&Symbol&Vertices&Edges& This Type
\\
\hline
316 & 420 & $\{12,3\text{-}4\}$ & 1470 & 2520 & 6
\\
(30 components) & 630 & $\{8,4\}$ & 1260 & 2520 & 12
\\
& 630 & $\{8,3\text{-}6\}$ & 1260 & 2520 & 6
\\
& 840 & $\{6,4\text{-}6\}$ & 1050 & 2520 & 6
\\
\hline
337 & 504 & $\{10,3\text{-}5\}$ & 1344 & 2520 & 6
\\
(9 components) & 840 & $\{6,5\}$ & 1008 & 2520 & 3
\\
\hline
346 & 360 & $\{14,3\text{-}4\}$ & 1470 & 2520 & 16
\\
(48 components) & 630 & $\{8,3\text{-}7\}$ & 1200 & 2520 & 16
\\
& 840 & $\{6,4\text{-}7\}$ & 990 & 2520 & 16
\\
\hline
379 & 420 & $\{12,3\text{-}5\}$ & 1344 & 2520 & 6
\\
(51 components) & 504 & $\{10,4\}$ & 1260 & 2520 & 11
\\
& 504 & $\{10,3\text{-}6\}$ & 1260 & 2520 & 6
\\
& 630 & $\{8,4\text{-}5\}$ & 1134 & 2520 & 22
\\
& 840 & $\{6,5\text{-}6\}$ & 924 & 2520 & 6
\\
\hline
409 & 360 & $\{14,3\text{-}5\}$ & 1344 & 2520 & 18
\\
(54 components) & 504 & $\{10,3\text{-}7\}$ & 1200 & 2520 & 18
\\
& 840 & $\{6,5\text{-}7\}$ & 864 & 2520 & 18
\\
\hline
421 & 420 & $\{12,4\}$ & 1260 & 2520 & 6
\\
(21 components) & 420 & $\{12,3\text{-}6\}$ & 1260 & 2520 & 2
\\
& 630 & $\{8,4\text{-}6\}$ & 1050 & 2520 & 12
\\
& 840 & $\{6,6\}$ & 840 & 2520 & 1
\\
\hline
442 & 504 & $\{10,4\text{-}5\}$ & 1134 & 2520 & 8
\\
(12 components) & 630 & $\{8,5\}$ & 1008 & 2520 & 4
\\
\hline
451 & 360 & $\{14,4\}$ & 1260 & 2520 & 20
\\
(88 components) & 360 & $\{14,3\text{-}6\}$ & 1260 & 2520 & 8
\\
 & 420 & $\{12,3\text{-}7\}$ & 1200 & 2520 & 8
\\
& 630 & $\{8,4\text{-}7\}$ & 990 & 2520 & 44
\\
& 840 & $\{6,6\text{-}7\}$ & 780 & 2520 & 8
\\
\hline
481 & 360 & $\{14,3\text{-}7\}$ & 1200 & 2520 & 26
\\
(37 components) & 840 & $\{6,7\}$ & 720 & 2520 & 11
\\
\hline
484 & 420 & $\{12,4\text{-}5\}$ & 1134 & 2520 & 8
\\
(24 components) & 504 & $\{10,4\text{-}6\}$ & 1050 & 2520 & 8
\\
 & 630 & $\{8,5\text{-}6\}$ & 924 & 2520 & 8
\\
\hline
505 & 504 & $\{10,5\}$ & 1008 & 2520 & 1
\\
\hline
514 & 360 & $\{14,4\text{-}5\}$ & 1134 & 2520 & 36
\\
(108 components) & 504 & $\{10,4\text{-}7\}$ & 990 & 2520 & 36
\\
 & 630 & $\{8,5\text{-}7\}$ & 864 & 2520 & 36
\\
\hline
526 & 420 & $\{12,4\text{-}6\}$ & 1050 & 2520 & 4
\\
(6 components) & 630 & $\{8,6\}$ & 840 & 2520 & 2
\\
\hline
547 & 420 & $\{12,5\}$ & 1008 & 2520 & 4
\\
(12 components) & 504 & $\{10,5\text{-}6\}$ & 924 & 2520 & 8
\\
\hline
\end{tabular}
\end{table}

\clearpage

\begin{table}[htp]
\caption*{Table \ref{table:Alt7}$: A_7$ - continued}
\begin{tabular}{|c|c|c|c|c|c|}
\hline
 Genus & \# & Schl\"afli & \# & \# & \# Components of
\\
&Faces&Symbol&Vertices&Edges& This Type
\\
\hline
556 & 360 & $\{14,4\text{-}6\}$ & 1050 & 2520 & 16
\\
(48 components) & 420 & $\{12,4\text{-}7\}$ & 990 & 2520 & 16
\\
& 630 & $\{8,6\text{-}7\}$ & 780 & 2520 & 16
\\
\hline
577 & 360 & $\{14,5\}$ & 1008 & 2520 & 10
\\
(34 components) & 504 & $\{10,5\text{-}7\}$ & 864 & 2520 & 24
\\
\hline
586 & 360 & $\{14,4\text{-}7\}$ & 990 & 2520 & 40
\\
(60 components) & 630 & $\{8,7\}$ & 720 & 2520 & 20
\\
\hline
589 & 420 & $\{12,5\text{-}6\}$ & 924 & 2520 & 2
\\
(3 components) & 504 & $\{10,6\}$ & 840 & 2520 & 1
\\
\hline
619 & 360 & $\{14,5\text{-}6\}$ & 924 & 2520 & 14
\\
(42 components) & 420 & $\{12,5\text{-}7\}$ & 864 & 2520 & 14
\\
& 504 & $\{10,6\text{-}7\}$ & 780 & 2520 & 14
\\
\hline
631 & 420 & $\{12,6\}$ & 840 & 2520 & 1
\\
\hline
649 & 360 & $\{14,5\text{-}7\}$ & 864 & 2520 & 48
\\
(70 components) & 504 & $\{10,7\}$ & 720 & 2520 & 22
\\
\hline
661 & 420 & $\{12,6\text{-}7\}$ & 780 & 2520 & 4
\\
(4 components) &&&&&
\\
\hline
691 & 360 & $\{14,6\text{-}7\}$ & 780 & 2520 & 14
\\
(21 components) & 420 & $\{12,7\}$ & 720 & 2520 & 7
\\
\hline
721 & 360 & $\{14,7\}$ & 720 & 2520 & 20
\\
(20 components) &&&&&
\\
\hline
\end{tabular}
\end{table}

\end{document}